\definecolor{verylight}{gray}{0.97}
\definecolor{light}{gray}{0.9}
\definecolor{medium}{gray}{0.85}
\definecolor{dark}{gray}{0.6}
 \def\frk{\mathfrak}               % font for "Fraktur"
 \def\mm{{\frk m}}
 \def\G{{\mathcal G}}
 \def\0b{{\mathbf 0}}
 \def\opn#1#2{\def#1{\operatorname{#2}}} % to make operators
 \opn\chara{char} \opn\length{\ell} \opn\pd{pd} \opn\rk{rk}
 \opn\projdim{proj\,dim} \opn\injdim{inj\,dim} \opn\rank{rank}
 \opn\depth{depth} \opn\grade{grade} \opn\height{height}
 \opn\embdim{emb\,dim} \opn\codim{codim}
 \opn\Tr{Tr} \opn\bigrank{big\,rank}
 \opn\superheight{superheight}\opn\lcm{lcm}
 \opn\trdeg{tr\,deg}%\emph{
 \opn\reg{reg} \opn\lreg{lreg} \opn\ini{in} \opn\lpd{lpd}
 \opn\size{size} \opn\sdepth{sdepth}
 \opn\link{link}\opn\fdepth{fdepth}\opn\lex{lex}
 \opn\tr{tr}
 \opn\type{type}
 \opn\gap{gap}
 \opn\arithdeg{arith-deg}
 \opn\HS{HS}
 \opn\GL{GL}
 \opn\div{div} \opn\Div{Div} \opn\cl{cl} \opn\Cl{Cl}
 \opn\Spec{Spec} \opn\Supp{Supp} \opn\supp{supp} \opn\Sing{Sing}
 \opn\Ass{Ass} \opn\Min{Min}\opn\Mon{Mon}
 \opn\Ann{Ann} \opn\Rad{Rad} \opn\Soc{Soc}\opn\Deg{Deg}
 \opn\Im{Im} \opn\Ker{Ker} \opn\Coker{Coker} \opn\Am{Am}
 \opn\Hom{Hom} \opn\Tor{Tor} \opn\Ext{Ext} \opn\End{End}
 \opn\Aut{Aut} \opn\id{id}
 \opn\nat{nat}
 \opn\pff{pf}%   \pf exists already
 \opn\Pf{Pf} \opn\GL{GL} \opn\SL{SL} \opn\mod{mod} \opn\ord{ord}
 \opn\Gin{Gin} \opn\Hilb{Hilb}\opn\sort{sort}
 \opn\PF{PF}\opn\Ap{Ap}
 \opn\mult{mult}
 \opn\bight{bight}
 \opn\aff{aff}
 \opn\relint{relint} \opn\st{st}
 \opn\lk{lk} \opn\cn{cn} \opn\core{core} \opn\vol{vol}  \opn\inp{inp} \opn\nilpot{nilpot}
 \opn\link{link} \opn\star{star}\opn\lex{lex}\opn\set{set}
 \opn\width{wd}
 \opn\Fr{F}
 \opn\QF{QF}
 \opn\G{G}
 \opn\type{type}\opn\res{res}
 \opn\conv{conv}
 \opn\Ind{Ind}
 \opn\gr{gr}
 \def\pot#1#2{#1[\kern-0.28ex[#2]\kern-0.28ex]}
 \opn\dirlim{\underrightarrow{\lim}}
 \opn\inivlim{\underleftarrow{\lim}}
 \let\dirsum=\oplus
 \let\to=\rightarrow
 \def\Implies{\ifmmode\Longrightarrow \else
         \unskip${}\Longrightarrow{}$\ignorespaces\fi}
 \def\implies{\ifmmode\Rightarrow \else
         \unskip${}\Rightarrow{}$\ignorespaces\fi}
 \def\iff{\ifmmode\Longleftrightarrow \else
         \unskip${}\Longleftrightarrow{}$\ignorespaces\fi}
 \newtheorem{Theorem}{Theorem}[section]
 \newtheorem{Lemma}[Theorem]{Lemma}
 \newtheorem{Corollary}[Theorem]{Corollary}
 \newtheorem{Remark}[Theorem]{Remark}
 \newtheorem{Example}[Theorem]{Example}
 \newtheorem{Definition}[Theorem]{Definition}
 \let\epsilon\varepsilon
 \let\kappa=\varkappa
 \def\qed{\ifhmode\textqed\fi
       \ifmmode\ifinner\quad\qedsymbol\else\dispqed\fi\fi}
 \def\textqed{\unskip\nobreak\penalty50
        \hskip2em\hbox{}\nobreak\hfil\qedsymbol
        \parfillskip=0pt \finalhyphendemerits=0}
 \def\dispqed{\rlap{\qquad\qedsymbol}}
 \opn\dis{dis}
 \def\pnt{{\raise0.5mm\hbox{\large\bf.}}}
 \opn\Lex{Lex}
\begin{document}
%\date{\today}

\title{Freiman Borel type ideals}

\author{Guangjun Zhu$^{\ast}$, Yakun Zhao, Shiya Duan and  Yulong Yang  }

%\date{\today }

\address{ School of Mathematical Sciences, Soochow University, Suzhou, Jiangsu, 215006, P. R. China}

\email{zhuguangjun@suda.edu.cn(Corresponding author:Guangjun Zhu),
\linebreak[4]1768868280@qq.com(Yakun Zhao),3136566920@qq.com(Shiya Duan), 1975992862@qq.com
\linebreak(Yulong Yang).}

\thanks{$^{\ast}$ Corresponding author}
\thanks{2020 {\em Mathematics Subject Classification}.
Primary  13E15; Secondary 13F20; 05E40}

% 13F20,  % Polynomial rings and ideals; rings of integer-valued polynomials
% 05E40, % Combinatorial aspects of commutative algebra
% 13A15, Ideals and multiplicative ideal theory in commutative rings

% 13F65, % Commutative rings defined by binomial ideals, toric rings, etc
% 13C15, % Dimension theory, depth, related commutative rings (catenary, etc.)
% 13C70, % Theory of modules and ideals in commutative rings described by combinatorial properties [See also 05C25, 05E40]
% 13C13, % Other special types of modules and ideals in commutative rings
% 05E40 , % Combinatorial aspects of commutative algebra
% 13D02, % Syzygies, resolutions, complexes and commutative rings
% 13F20, % Polynomial rings and ideals; rings of integer-valued polynomials
%		13H10   	Special types (Cohen-Macaulay, Gorenstein, Buchsbaum, etc.)
%		13D02   	Syzygies, resolutions, complexes
%		05E40   	Combinatorial aspects of commutative algebra
%		16S36   	Ordinary and skew polynomial rings and semigroup rings

%		14M25   	Toric varieties, Newton polyhedra [See also 52B20]
%		13A02   	Graded rings
%		13F20   	Polynomial rings and ideals; rings of integer-valued polynomials
%		13A18   	Valuations and their generalizations
%		06A11   	Algebraic aspects of posets
%       13E15     Commutative rings and modules of finite generation or presentation; number of generators

\thanks{Keywords: Freiman ideal, Borel ideal, $k$-Borel idea}

\maketitle
\begin{abstract}
An equigenerated monomial ideal $I$ in the polynomial ring  $S= K[x_1,\ldots,x_n]$ is a
Freiman ideal if $\mu(I^2)=\ell(I)\mu(I)-{\ell(I)\choose 2}$ where $\ell(I)$ is the analytic spread of $I$ and $\mu(I)$ is the number of minimal generators of $I$.  In this paper, we classify
 certain classes of Borel type ideals, including Borel ideals with multiple Borel generators  and principal $k$-Borel ideals,  which are Freiman.
\end{abstract}

\section*{Introduction}

The concept  of Freiman ideals appeared the first time in \cite{HZ1}. Based on a famous theorem of Freiman \cite{Fre}, it was shown in \cite{HMZ} that if $I$ is an
equigenerated monomial ideal, i.e., all its
generators are of the same degree, then $\mu(I^2)\geq \ell(I)\mu(I)-{\ell(I)\choose 2}$. Here $\mu(I)$ denotes the minimal number of generators of the ideal $I$  and $\ell(I)$  denotes its analytic spread which by definition is the Krull dimension of the
fiber ring $F(I)=\dirsum_{k\geq 0}I^k/\mm I^k$, where $\mm$ denotes the graded maximal ideal of $S$.
If the equality holds, then Herzog and Zhu in \cite{HZ1}  called the ideal $I$ a Freiman ideal (or simply Freiman).
As a generalization of Freiman’s theorem, a result was proved by B\"or\"oczky et. al.
\cite{BSS} which in algebraic terms say that for any equigenerated monomial ideal $I$,
one has
$$\mu(I^{k})\geq {\ell(I)+k-2\choose k-1}\mu(I)-(k-1){\ell(I)+k-2\choose k}$$ for all $k\geq 1$.
It was shown in  \cite[Theorem 2.3]{HHZ} that the equality holds if and only if $I$ is a Freiman ideal if and only if the fiber cone $F(I)$ of $I$
has minimal multiplicity if and only if $F(I)$ is Cohen-Macaulay and its defining ideal has a $2$-linear
resolution. This is a very restrictive condition for ideals arising from combinatorial structures, which
often guarantees strong combinatorial properties.

Many classes of Freiman ideals such as Hibi ideals, Veronese type ideals, matroid ideals, sortable ideals, principal Borel
ideals, $t$-spread principal Borel ideals, edge ideals and cut ideals  of several graphs,  and cover ideals of some classes of graphs such as trees, circulant graphs,  whiskered graphs and simple connected unmixed bipartite graphs have been studied (see \cite{DG,HRT,HZ1,HZ2,ZZC1,ZZC2}).

Let $S= K[x_1,\ldots,x_n]$ be the polynomial ring in $n$ variables over a field $K$ and $I\subset S$ a graded ideal.
By famous theorems   of Galligo~\cite{G} and Bayer-Stillman~\cite{BS}, the generic initial ideal of $I$ is  fixed under the action of the Borel subgroups of $GL(n,K)$.
Moreover, if  $\chara(K)=0$, then this generic initial ideal is  precisely a strongly stable ideal (also known as Borel ideal),  see \cite[Proposition 4.2.4]{HH1}.

Let $\Mon(S)$ denote the set of monomials in $S$ and for any $u\in \Mon(S)$, we set $\supp(u)=\{i:\ x_i|u\}$.
Let $k$ be a positive integer. A  monomial $x_1^{a_1}\cdots x_n^{a_n}\in \Mon(S)$ is called to be {\em $k$-bounded},  if $a_i\leq k$ for $i=1,\ldots,n$. A monomial ideal  which is generated by $k$-bounded monomials   is said to be  {\em $k$-Borel}, if  for any $k$-bounded monomial $u\in I$, and for any $j\in \supp(u)$ and $i<j$, we have $x_i(u/x_j)\in I$,  provided   $x_i(u/x_j)$ is again  $k$-bounded. Thus $k$-Borel ideals are strongly stable ideals respecting the $k$-boundedness and $1$-Borel ideals are  squarefree strongly stable  ideals, and also $1$-spread  Borel ideals.   Other interesting  restrictions of strongly stable ideals have been considered in \cite{CKST}.
For $k$-bounded monomials $u_1,\ldots,u_m$,  Herzog et al. showed  in \cite{HMRZ}  that there exists a unique smallest $k$-Borel ideal, denote by $B_k(u_1,\ldots,u_m)$, which containing $u_1,\ldots,u_m$. These  monomials $u_1,\ldots,u_m$ are called to be the {\em $k$-Borel generators} of this ideal. A $k$-Borel ideal with one $k$-Borel generator is called {\em principal $k$-Borel}.

 For a monomial ideal $I\subset S$, the unique minimal set of monomial generators
of $I$ is denoted by $G(I)$. A monomial ideal $I\subset S$ is called {\em Borel}, if for any $u\in G(I)$ and any $j\in \supp(u)$,  we
have  $x_i(u/x_j)\in  I$ for any $i<j$.  It is clear that  Borel ideals are  $k$-Borel  for $k\gg 0$, since large  $k$ imposes no conditions on the exponents of the generators. Hence  for any monomials $u_1,\ldots,u_m$, there is also   unique smallest Borel  ideal containing $u_1,\ldots,u_m$. Similarly, we denote this Borel ideal  by $B(u_1,\ldots,u_m)$ and  call these monomials to be the {\em Borel generators} of $B(u_1,\ldots,u_m)$. A Borel ideal with one Borel generator is called {\em principal Borel}.

In this paper,  we are interested in when some Borel type ideals, including Borel ideals with multiple Borel generators  and principal $k$-Borel ideals, are Freiman.

 We give a complete classification of Freiman Borel ideal of degree $2$. For the Borel ideal of high degree,  we give some sufficient conditions for it to be Freiman. The reason is that checking the Freiman condition for such ideals leads to difficult numerical problems, which at this moment, we are not able to handle.  Maybe there exists another approach to these problems which we are aware of at present.
Let  $B_k(u)$ be a principal $k$-Borel ideal of degree $d$. If $k\geq d$, then $B_k(u)=B(u)$. In this case, Herzog and the first author in  \cite[Theorem 4]{HZ2} answered when it is Freiman. If $k=1$, then $B_k(u)$ is
a  $1$-spread Borel ideal. In  this case,  a complete classification  has been given in \cite{ZZC2}. We provide  a complete classification of Freiman principal $k$-Borel ideals of degree $d$ if $k=2$ or  $k=d-1$.

Our paper is organized as follows. In Section 1, we classify all Freiman Borel ideals of degree $2$.
In Section 2,  we study Borel ideals  of degree $d\geq 3$  and ask which of them are Freiman. For  such ideals, we only have very partial results.
In Section 3, we first show that  principal $k$-Borel ideals are sortable ideals and then, appling \cite[Theorem 3]{HZ2}, we will
give a complete classification of Freiman principal $k$-Borel ideals of degree $d$ if   $k=2$ or  $k=d-1$.

Throughout this paper, we assume that $n\geq 3$ is an integer and $S= K[x_1,\ldots,x_n]$ is the polynomial ring in $n$ variables over a field $K$.

\medskip

\section{ Borel ideals of degree $2$}
 Herzog and the first author in \cite{HZ2} gave  a complete classification of Freiman principal Borel ideal of any degree.
In this section, we classify  all Freiman Borel ideals with  multiple Borel generators of degree $2$.

Let $u \in S$ be a monomial, we set $m(u)=\max\{j \;\mid \; j\in \supp(u)\}$. We need the following lemmas.

\begin{Lemma}\label{analytic spread}
	{\em (\cite[Lemma 3.1]{HZ1})}
Let $I=B(u_{1},\ldots, u_{m})$ be a Borel ideal,  where  $u_{1},\ldots, u_{m} \in  S$ are   monomials of same degree. Then $\ell(I)=\max\{m(v)\mid v\in G(I)\}$.
\end{Lemma}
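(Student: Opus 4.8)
The plan is to compute $\ell(I)=\dim F(I)$ directly by identifying the fiber cone with a toric ring and then comparing its Krull dimension against $\ell:=\max\{m(v)\mid v\in G(I)\}$. Since $I=B(u_1,\dots,u_m)$ is equigenerated, say in degree $d$, the fiber cone $F(I)$ is isomorphic to the $K$-subalgebra $K[G(I)]\subseteq S$ generated by the minimal monomial generators of $I$, and the Krull dimension of this affine semigroup ring equals the rank of the sublattice $L\subseteq\ZZ^n$ spanned by the exponent vectors $\{\ab\mid x^{\ab}\in G(I)\}$ (standard for monomial ideals; see \cite{HH1}). So it suffices to show $\rk L=\ell$.

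The inequality $\ell(I)\le\ell$ is immediate: every $v\in G(I)$ has $m(v)\le\ell$, hence $\supp(v)\subseteq\{1,\dots,\ell\}$, so every exponent vector lies in $\ZZ^{\ell}\times\{0\}^{n-\ell}$ and $\rk L\le\ell$. For the reverse inequality I would first record the small but essential observation that, because $I$ is equigenerated in degree $d$, \emph{every} monomial of degree $d$ lying in $I$ is a minimal generator: it is divisible by some $g\in G(I)$, and a degree count forces the quotient to be $1$. Now choose $v=x^{\ab}\in G(I)$ with $m(v)=\ell$, so $a_\ell\ge 1$ and $\ell\in\supp(v)$. Since $I$ is Borel, for each $i=1,\dots,\ell-1$ the monomial $w_i:=x_i(v/x_\ell)$ lies in $I$, and as $\deg w_i=d$ it lies in $G(I)$ by the observation. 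The exponent vectors of $v,w_1,\dots,w_{\ell-1}$ are $\ab$ and $\ab+\eb_i-\eb_\ell$ for $i=1,\dots,\ell-1$, so $L$ contains $\ab$ together with the $\ell-1$ vectors $\eb_i-\eb_\ell$. These latter vectors are linearly independent and all lie in the coordinate-sum-zero hyperplane of $\QQ^{\ell}$, whereas $\ab$ has coordinate sum $d\neq 0$; hence the $\ell$ vectors are linearly independent and $\rk L\ge\ell$. Combining the two bounds gives $\ell(I)=\ell$.

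I do not anticipate a genuine obstacle in this argument; the two points that need care are the identification $\dim F(I)=\rk L$ (which is exactly where equigeneration is used) and the remark that a Borel move applied to a generator of an equigenerated ideal lands again inside $G(I)$ rather than producing a non-minimal element — this is what legitimizes the lower-bound lattice computation. If one prefers to avoid the toric-ring language, an equivalent route is to note that $\bigcup_{v\in G(I)}\supp(v)=\{1,\dots,\ell\}$ (again via Borel moves) and invoke the known value of the analytic spread for polymatroidal ideals, but the direct lattice computation above is self-contained.
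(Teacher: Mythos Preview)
Your argument is correct, but note that the paper does not actually prove this lemma: it is quoted verbatim from \cite[Lemma~3.1]{HZ1} and used as a black box, so there is no ``paper's own proof'' to compare against. Your self-contained toric/lattice computation is a perfectly good replacement for the citation; the two delicate points you flagged --- that equigeneration makes $F(I)\iso K[G(I)]$ and that a Borel move on a minimal generator stays in $G(I)$ --- are handled cleanly, and the linear-independence step (the $\eb_i-\eb_\ell$ lie in the sum-zero hyperplane while $\ab$ does not) is exactly the right way to get the lower bound $\rk L\ge\ell$.
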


 Herzog and the first author in \cite[Theorem 4]{HZ2} gave  a complete characterization of Freiman principal Borel ideals. Therefore, in the succeeding sections we will assume that 
$I=B(u_1,\ldots,u_m)$ is a Borel ideal  with Borel generators $u_1,\ldots,u_m$ and $m\geq 2$.

\begin{Lemma}\label{degree2}
Let $I=B(u_1,\ldots,u_m)$ be an  equigenerated Borel ideal of degree $2$ with Borel generators $u_1,\ldots,u_m$.
Set $u_k=x_{i_{k}}x_{j_{k}}$ with $i_k\leq j_k$ for any $1\leq k\leq m$.
Then the minimum number of Borel generators of $I$ is $m$ if and only if,
after relabelling of $u_1,\ldots,u_m$ if necessary, one has
\[
i_{1}<i_{2}<\cdots <i_{m}\leq j_{m}<j_{m-1}<\cdots<j_{1}. \eqno (1)
\]
\end{Lemma}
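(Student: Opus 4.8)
The plan is to analyze, for a degree-$2$ equigenerated Borel ideal $I=B(u_1,\ldots,u_m)$ with $u_k=x_{i_k}x_{j_k}$, $i_k\le j_k$, when the given family of $m$ monomials is irredundant as a set of Borel generators, i.e., when no $u_k$ lies in $B(u_1,\ldots,\widehat{u_k},\ldots,u_m)$. The key combinatorial observation I would use is the following explicit description of membership: for monomials $v=x_ax_b$ with $a\le b$ and $u_k=x_{i_k}x_{j_k}$ with $i_k\le j_k$, one has $v\in B(u_k)$ if and only if $a\le i_k$ and $b\le j_k$. (This is immediate from the definition of the Borel operation on a degree-$2$ monomial: we may only decrease indices, and the smaller of the two indices of $v$ must dominate — i.e. be $\le$ — the smaller index of some generator, likewise for the larger.) Consequently, $v\in B(u_1,\ldots,u_m)$ iff there exists $k$ with $a\le i_k$ and $b\le j_k$, so $u_k\in B(\{u_\ell:\ell\ne k\})$ iff there is $\ell\ne k$ with $i_k\le i_\ell$ and $j_k\le j_\ell$; in other words, $\{u_1,\ldots,u_m\}$ is an irredundant generating set precisely when the points $(i_k,j_k)$ form an antichain in the coordinatewise partial order on $\{(a,b):a\le b\}$.

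Granting that reformulation, the statement becomes: a set of $m$ points $(i_k,j_k)$ with $i_k\le j_k$ is a (coordinatewise) antichain if and only if, after relabelling, $i_1<i_2<\cdots<i_m$ and $j_1>j_2>\cdots>j_m$. First I would note the first coordinates must be pairwise distinct: if $i_k=i_\ell$ then one of $j_k\le j_\ell$, $j_\ell\le j_k$ holds, giving a comparable pair; similarly the second coordinates are pairwise distinct. So relabel so that $i_1<i_2<\cdots<i_m$. Then the antichain condition forces, for each $k<\ell$, that $(i_k,j_k)$ and $(i_\ell,j_\ell)$ be incomparable; since $i_k<i_\ell$, incomparability forces $j_k>j_\ell$. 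Hence $j_1>j_2>\cdots>j_m$. The chain of inequalities in display $(1)$ is then assembled from $i_m\le j_m$ together with $i_1<\cdots<i_m$ and $j_m<\cdots<j_1$. Conversely, if $(1)$ holds, then for $k<\ell$ we have $i_k<i_\ell$ and $j_k>j_\ell$, so no two generators are comparable, the family is an antichain, and by the membership criterion each $u_k$ is needed; thus $\mu_{\mathrm{Borel}}(I)=m$.

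The main obstacle, and the only place requiring care, is establishing the membership criterion $v\in B(u_k)\iff(a\le i_k\text{ and }b\le j_k)$ cleanly, because a priori one must rule out that $v$ could be produced from $u_k$ by a sequence of Borel moves that is not simply "lower the two indices independently" — e.g. moves that temporarily merge the two variables into a square $x_c^2$ and then split them again. I would handle this by the standard characterization of principal Borel ideals: $B(x_{i_k}x_{j_k})$ is generated by all $x_ax_b$ with $a\le i_k$ and $b\le j_k$ (this is \cite[stated for principal Borel ideals]{HH1}, or can be checked directly: the set of such monomials is clearly Borel, contains $u_k$, and is contained in any Borel ideal containing $u_k$). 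Once this is in hand everything else is bookkeeping, and Lemma~\ref{analytic spread} is not even needed for this particular statement — it will be used only later when computing $\ell(I)$ in the Freiman classification. I would also remark that the equivalence "$\{u_k\}$ irredundant $\iff$ it is the unique minimal Borel-generating set, of size $m$" uses that minimal Borel generators of a Borel ideal are unique, which follows from the fact that $B(-)$ is a closure operation together with the antichain structure just derived.
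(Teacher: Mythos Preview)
Your proposal is correct and follows essentially the same approach as the paper: both arguments rest on the criterion that $x_ax_b\in B(x_{i_k}x_{j_k})$ iff $a\le i_k$ and $b\le j_k$, and deduce that the generators are irredundant precisely when the pairs $(i_k,j_k)$ are pairwise incomparable. Your packaging as an antichain condition is a bit more streamlined than the paper's case-by-case induction on $m$, but the underlying logic is identical.
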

\begin{proof}
$(\Rightarrow)$ It is trival if $I$ is a principal Borel ideal. If the minimum number of Borel generators of $I$ is two, we may
suppose that $I=B(u_1,u_2)$, where $u_k=x_{i_k}x_{j_k}$ with $i_k\leq j_k$ for  $k=1,2$. In this case, one has $i_1\neq i_2$.
Otherwise, we may assume that $j_1>j_2$ since the case $j_1<j_2$ can be
shown by similar arguments.
Thus $u_2=x_{j_2}(u_1/x_{j_1})$, which implies that $u_2\in B(u_1)$. It follows that  $B(u_2)\subseteq B(u_1)$. Hence $I=B(u_1,u_2)=\sum_{k=1}^{2}B(u_k)=B(u_1)$, a contradiction.

Let $i_1<i_2$. If $j_1\leq j_2$, then $x_{i_2}x_{j_1}=x_{j_1}(u_2/x_{j_2})\in B(u_2)$, which forces $u_1=x_{i_1}(x_{i_2}x_{j_1}/x_{i_2})\in B(u_2)$. Hence $B(u_1)\subseteq B(u_2)$, which implies  $I=B(u_1,u_2)=B(u_2)$, a contradiction.

Assume that  the minimum number of Borel generators of $I$ is $m\geq 3$ and  $u_1,\ldots,u_m$ are Borel generators of $I$, where  $u_k=x_{i_k}x_{j_k}$ with $i_k\leq j_k$. By   minimality of the number of Borel generators, one has
$i_1,\ldots,i_m$ are different from each other. Let  $i_1<i_2<\cdots <i_m$, then, from the above arguments, one has  $j_{m}<j_{m-1}$ since $i_{m-1}<i_{m}$. Step by step, one has  $i_{1}<i_{2}<\cdots <i_{m}\leq j_{m}<j_{m-1}<\cdots<j_{1}$.

$(\Leftarrow)$ Let $i_1<\cdots<i_m\leq j_m<\cdots<j_1$, then $u_\ell\notin B(u_k)$ and $u_k\notin B(u_\ell)$
for any  two different Borel  generators $u_\ell,u_k$ of $I$. It follows that  $u_\ell\notin \sum_{k=1,\\
k\neq \ell}^{m}B(u_k)$, i.e., $u_\ell\notin B(u_1,\ldots,\hat{u_\ell},\ldots,u_m)$, as desired.
\end{proof}

\begin{Lemma}\label{case1}
Let $I=B(x_1x_{j_1},x_2x_{j_2})$ be a Borel ideal, where $2\leq j_{2}<j_1\leq n$. Then  $I$ is  Freiman.
\end{Lemma}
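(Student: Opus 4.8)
The plan is to compute the three quantities $\mu(I)$, $\ell(I)$ and $\mu(I^2)$ explicitly and then simply verify the defining equality $\mu(I^2)=\ell(I)\mu(I)-\binom{\ell(I)}{2}$.

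First I would determine $G(I)$. Since $I=B(x_1x_{j_1})+B(x_2x_{j_2})$, and one computes directly from the Borel moves that $B(x_1x_{j_1})=x_1(x_1,\ldots,x_{j_1})$ while $B(x_2x_{j_2})$ has generators $\{x_2x_b: 2\le b\le j_2\}\cup\{x_1x_b: 1\le b\le j_2\}$, the hypothesis $j_2<j_1$ yields
\[
G(I)=\{x_1x_b : 1\le b\le j_1\}\cup\{x_2x_b : 2\le b\le j_2\},
\]
none of these monomials dividing another. Hence $\mu(I)=j_1+j_2-1$, and by Lemma~\ref{analytic spread} one gets $\ell(I)=\max\{m(v): v\in G(I)\}=j_1$. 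So the target number is $\ell(I)\mu(I)-\binom{\ell(I)}{2}=j_1(j_1+j_2-1)-\binom{j_1}{2}=\binom{j_1}{2}+j_1j_2$, using $j_1^2-j_1-\binom{j_1}{2}=\binom{j_1}{2}$.

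Next, since $I$ is generated in degree $2$, the ideal $I^2$ is generated in degree $4$, so every degree-$4$ monomial of $I^2$ is a minimal generator, and $\mu(I^2)$ is exactly the number of degree-$4$ monomials in $I^2$. Writing $J_1=(x_1,\ldots,x_{j_1})$ and $J_2=(x_2,\ldots,x_{j_2})$, we have $I^2=x_1^2J_1^2+x_1x_2J_1J_2+x_2^2J_2^2$, and I would count the degree-$4$ monomials of $I^2$ according to the exponent $e$ of $x_1$ dividing them. This is convenient because $e$ essentially separates the three summands: monomials with $e=0$ arise only from $x_2^2J_2^2$, those with $e=1$ only from $x_1x_2J_1J_2$, and those with $e\ge3$ only from $x_1^2J_1^2$, while for $e=2$ the contribution of $x_1x_2J_1J_2$ is visibly contained in the set coming from $x_1^2J_1^2$. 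A short bookkeeping then gives: $e=4$ contributes $1$; $e=3$ contributes $j_1-1$ (the monomials $x_1^3x_b$, $2\le b\le j_1$); $e=2$ contributes $\binom{j_1}{2}$ (the monomials $x_1^2x_ax_b$, $2\le a\le b\le j_1$); $e=0$ contributes $\binom{j_2}{2}$ (the monomials $x_2^2x_ax_b$, $2\le a\le b\le j_2$); and $e=1$ contributes $\binom{j_2}{2}+(j_1-j_2)(j_2-1)$. Summing, $\mu(I^2)=\binom{j_1}{2}+j_1j_2$, which is exactly the target value, so $I$ is Freiman.

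The only step requiring real care is the count for $e=1$: there one must enumerate the distinct degree-$2$ monomials expressible as $x_ax_b$ with $2\le a\le j_1$ and $2\le b\le j_2$. Separating those whose two indices both lie in $\{2,\ldots,j_2\}$ (there are $\binom{j_2}{2}$ of these) from those with one index in $\{j_2+1,\ldots,j_1\}$ and the other in $\{2,\ldots,j_2\}$ (there are $(j_1-j_2)(j_2-1)$ of these) gives the stated total. The remaining counts and the closing numerical comparison are routine, so the essential content of the argument is the explicit description of $G(I)$ together with this single enumeration.
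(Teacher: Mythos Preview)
Your proof is correct and follows essentially the same approach as the paper: both compute $\mu(I)$, $\ell(I)$, and $\mu(I^2)$ directly and verify the Freiman equality. Your stratification of the degree-$4$ monomials by the $x_1$-exponent is a slightly cleaner bookkeeping device than the paper's finer decomposition of $I^2$ into nine explicit summands, but the underlying argument is the same.
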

\begin{proof} By simple calculations, one has
$
I=(x_{1}^{2})+(x_{1}x_{2})+x_{1}(x_{3},\ldots,x_{j_{1}})+(x_{2}^2)+x_{2}(x_{3},\ldots,x_{j_{2}})
$
and
\begin{eqnarray*}
I^{2}&=&(x_{1}^{4})+(x_{1}^{3}x_{2})+x_{1}^{3}(x_{3},\ldots,x_{j_{1}})+(x_{1}^{2}x_{2}^{2})+x_{1}^{2}x_{2}(x_{3},\ldots,x_{j_{1}})
+(x_{1}x_{2}^{3})\\
&+&x_{1}^{2}(x_{3},\ldots,x_{j_{1}})^{2}+x_{1}x_{2}^{2}(x_{3},\ldots,x_{j_{1}})+x_{1}x_{2}(x_{3},\ldots,x_{j_{2}})^{2}\\
&+&x_{1}x_{2}(x_{3},\ldots,x_{j_{2}})(x_{j_{2}+1},\ldots,x_{j_{1}})+(x_{2}^{4})+x_{2}^{3}(x_{3},\ldots,x_{j_{2}})
+x_{2}^{2}(x_{3},\ldots,x_{j_{2}})^{2}.
\end{eqnarray*}
Since the minimal sets of monomial  generators   in  each  sum above are  pairwise  disjoint, it follows that
$\mu(I)=j_{1}+j_{2}-1$ and
\[
\mu(I^{2})={j_{1}-1\choose 2}+2{j_{2}-1\choose 2}+(j_{2}-2)(j_{1}-j_{2})+3j_{1}+j_{2}-3={j_{1}\choose 2}+j_{1}j_{2}.
\]
Therefore, we obtain that $I$ is Freiman by Lemma \ref{analytic spread}.
\end{proof}

\begin{Lemma}\label{case2}
Let $I=B(x_1x_{j_1},x_3^2)$ be a Borel ideal with $3< j_1\leq n$. Then  $I$ is  Freiman.
\end{Lemma}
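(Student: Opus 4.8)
The plan is to mimic the proof of Lemma~\ref{case1}: exhibit explicit monomial generators of $I$ and of $I^2$, split $I^2$ into a few ``blocks'' whose minimal monomial generating sets are pairwise disjoint, count within each block, and check that the resulting value of $\mu(I^2)$ equals $\ell(I)\mu(I)-\binom{\ell(I)}{2}$, which is precisely the defining equality of a Freiman ideal.

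First I would write down $I$. Since $B(x_3^2)$ is generated by all degree-$2$ monomials in $x_1,x_2,x_3$ and $B(x_1x_{j_1})=x_1(x_1,\dots,x_{j_1})$, one has
\[
I=(x_1,x_2,x_3)^2+x_1(x_4,\dots,x_{j_1}),
\]
so that $G(I)=\{x_1x_i\mid 1\le i\le j_1\}\cup\{x_2^2,x_2x_3,x_3^2\}$ consists of $j_1+3$ pairwise distinct (hence minimal) monomials; thus $\mu(I)=j_1+3$. Since $x_1x_{j_1}\in G(I)$ and $j_1>3$, Lemma~\ref{analytic spread} gives $\ell(I)=j_1$.

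Next I would square the decomposition: writing $A=(x_1,x_2,x_3)^2$ and $B=x_1(x_4,\dots,x_{j_1})$,
\[
I^2=A^2+AB+B^2=(x_1,x_2,x_3)^4+x_1(x_1,x_2,x_3)^2(x_4,\dots,x_{j_1})+x_1^2(x_4,\dots,x_{j_1})^2.
\]
The crucial point is that every minimal generator of $I^2$ has degree $4$, and a generator coming from $A^2$, $AB$, $B^2$ is divisible by exactly $0$, $1$, $2$ of the variables $x_4,\dots,x_{j_1}$ (counted with multiplicity); as all three blocks consist of degree-$4$ monomials, no generator of one block can divide a generator of another and the three minimal generating sets are disjoint, so $\mu(I^2)$ is the sum of the three block counts. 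Counting within blocks is routine: $A^2=(x_1,x_2,x_3)^4$ has $\binom{6}{2}=15$ generators; $B^2=x_1^2(x_4,\dots,x_{j_1})^2$ has $\binom{j_1-2}{2}$ generators; and $AB$ consists of the monomials $x_1 m x_a$ with $m$ one of the $6$ degree-$2$ monomials in $x_1,x_2,x_3$ and $4\le a\le j_1$, which are pairwise distinct (the index $a$ and the cofactor $m$ are recovered from the monomial), giving $6(j_1-3)$ generators. Hence $\mu(I^2)=15+6(j_1-3)+\binom{j_1-2}{2}$.

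Finally I would carry out the bookkeeping: this simplifies to $\mu(I^2)=\tfrac12 j_1(j_1+7)=j_1(j_1+3)-\binom{j_1}{2}=\ell(I)\mu(I)-\binom{\ell(I)}{2}$, so $I$ is Freiman. The one step needing care — the main obstacle — is the verification that the three blocks of $I^2$ have disjoint minimal generating sets and contribute no mutual redundancy; once the degree-$4$ count and the ``number of high-index factors'' are noted this is immediate, and the remainder is arithmetic, exactly as in Lemma~\ref{case1}.
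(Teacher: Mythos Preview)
Your proof is correct and follows essentially the same approach as the paper: decompose $I$ and $I^2$ into blocks whose minimal generating sets are pairwise disjoint, count, and verify the Freiman equality using Lemma~\ref{analytic spread}. Your three-block decomposition $I^2=A^2+AB+B^2$ with $A=(x_1,x_2,x_3)^2$, $B=x_1(x_4,\dots,x_{j_1})$ is in fact a bit more streamlined than the paper's nine-block split, but the method and the final count $\mu(I^2)=\binom{j_1-2}{2}+6j_1-3=\tfrac12 j_1(j_1+7)$ coincide.
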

\begin{proof}
By  direct  calculations, one has
$
I=(x_{1}^{2})+x_{1}(x_{2},x_{3})+(x_{2},x_{3})^{2}+x_{1}(x_{4},\ldots,x_{j_{1}})
$
and
\begin{eqnarray*}
I^{2}&=&(x_{1}^{4})+x_{1}^{3}(x_{2},x_{3})+x_{1}^{2}(x_{2},x_{3})^{2}+x_{1}^{3}(x_{4},\ldots,x_{j_{1}})
+x_{1}^{2}(x_{2},x_{3})(x_{4},\ldots,x_{j_{1}})\\
&+&x_{1}(x_{2},x_{3})^{3}+x_{1}^{2}(x_{4},\ldots,x_{j_{1}})^{2}+x_{1}(x_{2},x_{3})^{2}(x_{4},\ldots,x_{j_{1}})+(x_{2},x_{3})^{4}.
\end{eqnarray*}
Since the minimal sets of monomial  generators  in  each  sum above are pairwise disjoint, it follows that
$\mu(I)=j_{1}+3$ and  $\mu(I^{2})={j_{1}-2\choose 2}+6j_{1}-3=\frac{1}{2}j_1(j_1+7)$. The desired result follows from Lemma \ref{analytic spread}.
\end{proof}

\begin{Lemma}\label{case3}
Let $I=B(u_1,\ldots,u_m)$ be a Borel ideal  of degree $2$ with Borel generators $u_1,\ldots,u_m$. Let $u_k=x_{i_{k}}x_{j_{k}}$ with $1\leq i_k\leq j_k\leq n$ for $1\leq k\leq m$.  Then
\[
I=\sum_{k=1}^{m}(x_{i_{k-1}+1},\ldots,x_{i_{k}})(x_{i_{k-1}+1},\ldots,x_{j_{k}})
\]
where we stipulate $i_0=0$.
\end{Lemma}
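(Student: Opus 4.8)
The plan is to reduce Lemma~\ref{case3} to the standard description of a degree-two principal Borel ideal, followed by a two-sided inclusion. The first step is to normalise the generators: by Lemma~\ref{degree2}, applied to the minimal system of Borel generators of $I$, we may relabel $u_1,\ldots,u_m$ so that
\[
i_1<i_2<\cdots<i_m\leq j_m<j_{m-1}<\cdots<j_1 .
\]
In particular $i_k\leq j_k$ for all $k$ and the $j_k$'s are strictly decreasing. The second step is the elementary identity $B(x_ax_b)=(x_1,\ldots,x_a)(x_1,\ldots,x_b)$ for $a\leq b$: writing a degree-two monomial as $x_px_q$ with $p\leq q$, a Borel move only replaces the pair $(p,q)$ by a componentwise-smaller one after re-sorting, and conversely every $x_px_q$ with $p\leq a$, $q\leq b$ is reachable from $x_ax_b$; these are exactly the monomials obtained, after reordering the two factors, as a product of a generator of $(x_1,\ldots,x_a)$ with one of $(x_1,\ldots,x_b)$ (this reordering uses $a\leq b$). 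Hence
\[
I=\sum_{k=1}^{m}B(u_k)=\sum_{k=1}^{m}(x_1,\ldots,x_{i_k})(x_1,\ldots,x_{j_k}).
\]

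The inclusion ``$\supseteq$'' is immediate: with $i_0=0$, the index ranges $[i_{k-1}+1,i_k]$ and $[i_{k-1}+1,j_k]$ are nonempty (since $i_{k-1}<i_k\leq j_k$) and contained in $[1,i_k]$ and $[1,j_k]$, so the $k$-th summand on the right is contained in $(x_1,\ldots,x_{i_k})(x_1,\ldots,x_{j_k})=B(u_k)\subseteq I$. For ``$\subseteq$'', since $I=\sum_k B(u_k)$ it suffices to place each $B(u_k)=(x_1,\ldots,x_{i_k})(x_1,\ldots,x_{j_k})$ in the right-hand sum. A minimal generator of $B(u_k)$, after reordering its factors, is $x_px_q$ with $p\leq q$, $p\leq i_k$ and $q\leq j_k$. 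Let $\ell$ be the least index with $p\leq i_\ell$; then $1\leq \ell\leq k$ and $i_{\ell-1}<p\leq i_\ell$, so $x_p\in(x_{i_{\ell-1}+1},\ldots,x_{i_\ell})$, while $i_{\ell-1}<p\leq q$ and $q\leq j_k\leq j_\ell$ (using $\ell\leq k$ and that the $j_k$'s decrease) give $x_q\in(x_{i_{\ell-1}+1},\ldots,x_{j_\ell})$. Thus $x_px_q$ lies in the $\ell$-th summand on the right, and combining the two inclusions proves the identity.

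I do not anticipate a genuine obstacle; the substance is in the normalisation via Lemma~\ref{degree2}, after which everything is interval bookkeeping. The places requiring care are: that $a\leq b$ is really needed when identifying $B(x_ax_b)$ with $(x_1,\ldots,x_a)(x_1,\ldots,x_b)$ and that the analogous formula would fail for a non-minimal system of generators; that the ranges $[i_{\ell-1}+1,i_\ell]$ and $[i_{\ell-1}+1,j_\ell]$ are nonempty; and that $\ell\leq k$, which is exactly what makes $q\leq j_k\leq j_\ell$ hold. If one wishes to avoid the factor-reordering device, one can instead prove ``$\subseteq$'' by observing that each minimal generator $x_px_q$ ($p\leq q$) of $I$ arises from some $u_k$ by Borel moves and then locating the unique range $[i_{\ell-1}+1,i_\ell]$ containing $p$; this is the same computation in different dress.
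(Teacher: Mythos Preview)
Your proof is correct. Both you and the paper begin by invoking Lemma~\ref{degree2} to put the indices in the staircase form $i_1<\cdots<i_m\leq j_m<\cdots<j_1$ and then use the identification $B(x_{i_k}x_{j_k})=(x_1,\ldots,x_{i_k})(x_1,\ldots,x_{j_k})$, but from there the arguments diverge.

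The paper argues by a recursive peeling: it splits $B(u_m)=(x_1,\ldots,x_{i_{m-1}})(x_1,\ldots,x_{j_m})+(x_{i_{m-1}+1},\ldots,x_{i_m})(x_{i_{m-1}+1},\ldots,x_{j_m})$ via the decomposition of each factor at $i_{m-1}$, observes that the first piece is absorbed into $B(u_{m-1})$ because $j_m<j_{m-1}$, and then iterates down to $k=1$. Your approach instead proves the identity in one shot by a direct double inclusion on generators: the containment $\supseteq$ is immediate, and for $\subseteq$ you take any $x_px_q\in B(u_k)$ (with $p\leq q$, $p\leq i_k$, $q\leq j_k$), locate the unique bin $\ell\leq k$ with $i_{\ell-1}<p\leq i_\ell$, and use the monotonicity $j_k\leq j_\ell$ to place $x_px_q$ in the $\ell$-th summand. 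Your argument is more elementary and avoids induction; the paper's version is more algebraic and makes the absorption mechanism $(x_1,\ldots,x_{i_{m-1}})(x_1,\ldots,x_{j_m})\subseteq B(u_{m-1})$ explicit, which is convenient for the later computations in Theorem~\ref{main1} where one compares $I$ with $J=B(u_1,\ldots,u_{m-1})$.
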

\begin{proof}By Lemma \ref{degree2}, we may assume
$i_{1}<i_{2}<\cdots <i_{m}\leq j_{m}<j_{m-1}<\cdots<j_{1}$.
It follows that
\begin{eqnarray*}
B(u_m)&=&(x_1,\ldots,x_{i_m})(x_1,\ldots,x_{j_m})\\
&=&[(x_{1},\ldots,x_{i_{m-1}})+(x_{i_{m-1}+1},\ldots,x_{i_{m}})][(x_{1},\ldots,x_{i_{m-1}})+(x_{i_{m-1}+1},\ldots,x_{j_{m}})]\\
&=&(x_{1},\ldots,x_{i_{m-1}})^2+(x_{1},\ldots,x_{i_{m-1}})(x_{i_{m-1}+1},\ldots,x_{j_{m}})\\
&+&(x_{i_{m-1}+1},\ldots,x_{i_{m}})(x_{i_{m-1}+1},\ldots,x_{j_{m}})\\
&=&(x_{1},\ldots,x_{i_{m-1}})(x_{1},\ldots,x_{j_{m}})+(x_{i_{m-1}+1},\ldots,x_{i_{m}})(x_{i_{m-1}+1},\ldots,x_{j_{m}}).
\end{eqnarray*}
where the third equality  holds  because of $i_m\leq j_m$. Thus
\begin{eqnarray*}
I&=&\sum_{i=1}^{m}B(u_i)=\sum_{i=1}^{m-1}B(u_i)+B(u_m)\\
&=&\sum_{k=1}^{m-1}(x_{1},\ldots,x_{i_{k}})(x_{1},\ldots,x_{j_{k}})+(x_{1},\ldots,x_{i_{m-1}})(x_{1},\ldots,x_{j_{m}})\\
&+&(x_{i_{m-1}+1},\ldots,x_{i_{m}})(x_{i_{m-1}+1},\ldots,x_{j_{m}})\\
&=&\sum_{k=1}^{m-1}(x_{1},\ldots,x_{i_{k}})(x_{1},\ldots,x_{j_{k}})+(x_{i_{m-1}+1},\ldots,x_{i_{m}})(x_{i_{m-1}+1},\ldots,x_{j_{m}}).
\end{eqnarray*}
where the last equality  holds  because of  $j_m<j_{m-1}$.  By repeating the above  arguments, we obtain the desired result.
\end{proof}

\medskip
Now we are ready to present the main result of this section.
\begin{Theorem}\label{main1}
Let $m\geq 2$ be an integer and $I=B(u_1,\ldots,u_m)$  a Borel ideal  of degree $2$ with Borel generators $u_1,\ldots,u_m$ as in Lemma \ref{case3}. Then $I$ is Freiman if and only if $I=B(x_1x_{j_1},x_2x_{j_2})$ with $2\leq j_{2}<j_1\leq n$, or $I=B(x_1x_{j_1},x_3^2)$ with $3< j_1\leq n$.
\end{Theorem}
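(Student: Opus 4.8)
The plan is to prove both directions using the explicit decomposition of $I$ from Lemma~\ref{case3} together with Lemmas~\ref{case1} and \ref{case2}. The "if" direction is already done: Lemma~\ref{case1} handles $I=B(x_1x_{j_1},x_2x_{j_2})$ and Lemma~\ref{case2} handles $I=B(x_1x_{j_1},x_3^2)$, so only the "only if" direction requires work.

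For the converse, I would argue that if $I$ is not of one of these two forms, then $\mu(I^2)>\ell(I)\mu(I)-\binom{\ell(I)}{2}$. Write $u_k=x_{i_k}x_{j_k}$ with $i_1<i_2<\cdots<i_m\le j_m<\cdots<j_1$ as in Lemma~\ref{degree2}, so $\ell(I)=m(v)$ for the generator $v$ attaining the maximum; by Lemma~\ref{analytic spread} one checks $\ell(I)=\max\{j_1,\,i_m+1\}$ unless $i_m=j_m$ in which case there is a small adjustment (the generator $x_{i_m}x_{i_m+1}\in G(I)$ when $i_m+1\le n$, etc.). First I would dispose of the case $m=2$: with $u_1=x_{i_1}x_{j_1}$, $u_2=x_{i_2}x_{j_2}$, the minimality forces $i_1<i_2\le j_2<j_1$. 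If $i_1\ge 2$, applying the Borel moves shows $I=B(x_1(u_1/x_{i_1}),\ldots)$ has a generator with smaller first index, contradicting minimality unless one passes to a genuinely different ideal — more precisely I would show directly from Lemma~\ref{case3} that $i_1\ge 2$ forces $\mu(I^2)$ to exceed the Freiman bound, and similarly that $i_1=1$ with $i_2\ge 3$ and $j_2\ge i_2+1$ (i.e.\ $u_2$ not of the shape $x_2x_{j_2}$ or $x_3^2$) fails. The arithmetic is a direct count: using Lemma~\ref{case3}, $I=(x_1,\ldots,x_{i_1})(x_1,\ldots,x_{j_1})+(x_{i_1+1},\ldots,x_{i_2})(x_{i_1+1},\ldots,x_{j_2})$ for $m=2$, from which $\mu(I)$ and a lower bound for $\mu(I^2)$ follow by the same disjointness-of-graded-pieces bookkeeping used in Lemmas~\ref{case1} and \ref{case2}.

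For general $m\ge 3$, the strategy is to show no such ideal is Freiman. The cleanest route is a reduction: I would show that if $m\ge 3$ then $I$ contains a sub-configuration forcing failure, by comparing $\mu(I^2)$ against $\ell(I)\mu(I)-\binom{\ell(I)}{2}$ directly from the expansion $I=\sum_{k=1}^m(x_{i_{k-1}+1},\ldots,x_{i_k})(x_{i_{k-1}+1},\ldots,x_{j_k})$. Squaring this sum and collecting the minimal generators degree-piece by degree-piece (the cross terms $(x_{i_{k-1}+1},\ldots,x_{i_k})(x_{i_{\ell-1}+1},\ldots,x_{i_\ell})(x_{i_{k-1}+1},\ldots,x_{j_k})(x_{i_{\ell-1}+1},\ldots,x_{j_\ell})$ for $k<\ell$ contribute extra generators that are absent in the two-generator model), one obtains $\mu(I^2)\ge$ [explicit polynomial in the $i_k,j_k,m$] and one checks this strictly exceeds the bound whenever $m\ge 3$. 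Alternatively, and perhaps more transparently, since Freimanness is equivalent to $F(I)$ having minimal multiplicity, one could try to exhibit in $F(I)$ a syzygy or an obstruction showing the defining ideal cannot have a $2$-linear resolution once $m\ge 3$; but I expect the brute-force count to be the one actually carried out here, consistent with the paper's stated methods.

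The main obstacle will be the combinatorial bookkeeping for $\mu(I^2)$ in the general case: making the sum $I=\sum_{k=1}^m(x_{i_{k-1}+1},\ldots,x_{i_k})(x_{i_{k-1}+1},\ldots,x_{j_k})$ into an \emph{irredundant} decomposition of $I^2$ so that the count of minimal generators is exact, and then verifying the resulting inequality. The cross terms when the intervals $[i_{k-1}+1,j_k]$ overlap (which they always do, since $j_k>j_{k+1}>\cdots$) create many monomials, and one must be careful that no monomial is counted twice and that each listed monomial is genuinely a \emph{minimal} generator of $I^2$ (not a multiple of another). Once the exact or lower-bound formula for $\mu(I^2)$ is in hand, comparing with $\ell(I)\mu(I)-\binom{\ell(I)}{2}$ — where $\ell(I)$ is read off from Lemma~\ref{analytic spread} — reduces to a finite-degree polynomial inequality in the discrete parameters, and monotonicity in $m$ (each new Borel generator strictly increases the gap) finishes it. For $m=2$ the same comparison, now known to be an equality exactly in the two families by Lemmas~\ref{case1}–\ref{case2}, pins down the classification.
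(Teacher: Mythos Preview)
Your outline is in the right spirit---a generator count against the Freiman bound using the decomposition of Lemma~\ref{case3}---but it misses the organizing idea that makes the computation tractable, and it contains a small error about $\ell(I)$.

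First the error: since $i_m\le j_m<j_1$, Lemma~\ref{analytic spread} gives $\ell(I)=j_1$ outright; there is no ``$\max\{j_1,i_m+1\}$'' and no adjustment when $i_m=j_m$.

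More importantly, the paper does \emph{not} square the full sum $I=\sum_{k=1}^m A_k$ and count $\mu(I^2)$ directly, nor does it split into the cases $m=2$ and $m\ge 3$. Instead it compares $I$ with $J:=B(u_1,\ldots,u_{m-1})$ in a single step. Since $\ell(I)=\ell(J)=j_1$, one has
\[
\Delta(I)-\Delta(J)=\bigl(\mu(I^2)-\mu(J^2)\bigr)-\ell(I)\bigl(\mu(I)-\mu(J)\bigr),
\]
and the point is that both differences on the right depend only on the six ``outer'' parameters $i_1,i_{m-1},i_m,j_m,j_{m-1},j_1$. Writing $y=i_1$, $a=i_{m-1}-i_1$, $b=i_m-i_{m-1}$, $c=j_m-i_m$, $d=j_{m-1}-j_m$, $e=j_1-j_{m-1}$, the paper exhibits seven explicit families of monomials in $I^2\setminus J^2$ (all involving the new variables $x_{i_{m-1}+1},\ldots,x_{i_m}$, hence disjoint from $G(J^2)$), giving a lower bound $\Delta(I)-\Delta(J)\ge f(y,a,b,c,d,e)$. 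A short discrete-difference analysis in $y$ and then in $b$ shows $f\ge 0$, with equality exactly when $(y,a,b)=(1,0,1)$ or $(y,a,b,c)=(1,0,2,0)$. The condition $a=0$ (that is, $i_{m-1}=i_1$) already forces $m=2$, and the remaining constraints give precisely $u_2=x_2x_{j_2}$ or $u_2=x_3^2$; Lemmas~\ref{case1} and \ref{case2} then confirm these are Freiman. So the case $m\ge 3$ is not handled by a separate argument---it is ruled out automatically by $a=0$.

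Your direct approach of expanding $\bigl(\sum_k A_k\bigr)^2$ would in principle work, but the cross terms $A_kA_\ell$ for $k<\ell$ involve four interval ideals with overlapping supports, and deciding which of their products are \emph{minimal} generators of $I^2$ (rather than redundant) is exactly the bookkeeping nightmare you flag. The incremental trick sidesteps this entirely: one never needs $\mu(I^2)$ or $\mu(J^2)$ individually, only a lower bound on their difference obtained from monomials that visibly lie outside $J^2$.
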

\begin{proof} $(\Leftarrow)$ It follows directly from  Lemmas \ref{case1} and \ref{case2}.
		
$(\Rightarrow)$	Let $J\!:=B(u_1,\ldots,u_{m-1})$, then, by Lemma \ref{case3}, we have
\[
J=\sum_{k=1}^{m-1}(x_{i_{k-1}+1},\ldots,x_{i_{k}})(x_{i_{k-1}+1},\ldots,x_{j_{k}})
\]
 and
\begin{eqnarray*}
 I&=&J+(x_{i_{m-1}+1},\ldots,x_{i_{m}})(x_{i_{m-1}+1},\ldots,x_{j_{m}})\\
&=&J+(x_{i_{m-1}+1},\ldots,x_{i_{m}})^2+(x_{i_{m-1}+1},\ldots,x_{i_{m}})(x_{i_{m}+1},\ldots,x_{j_{m}}).
\end{eqnarray*}
For convenience, we put $i_{1}=y$, $i_{m-1}=y+a$, $i_{m}=y+a+b$, $j_{m}=y+a+b+c$, $j_{m-1}=y+a+b+c+d$, $j_{1}=y+a+b+c+d+e$, where $y,b,d>0$ and $a,c,e\geq 0$ are integers. Moreover, $a=0$ and $e=0$ hold simultaneously.
In this case, one has  $m=2$ from relation (1).

Let  $A_{k}\!:=(x_{i_{k-1}+1},\ldots,x_{i_{k}})(x_{i_{k-1}+1},\ldots,x_{j_{k}})$ for  $1\leq k\leq m$, where $i_0=0$. Then $J=\sum_{k=1}^{m-1}A_{k}$ and $I=J+A_{m}$.
Since $\{x_{i_{m-1}+1},\ldots,x_{i_{m}}\}\cap\supp(J)=\emptyset$, we obtain that
\begin{itemize}
\item[(1)] $A_{m}^2\subseteq I^2\setminus J^2$,
\item[(2)] $(x_{1},\ldots,x_{i_{m-1}})(x_{i_{m-1}+1},\ldots,x_{i_{m}})^3\subseteq I^2\setminus J^2$,
\item[(3)] $(x_{1},\ldots,x_{i_{1}})(x_{i_{m}+1},\ldots,x_{j_{1}})(x_{i_{m-1}+1},\ldots,x_{i_{m}})^2\subseteq I^2\setminus J^2$,
\item[(4)] $(x_{1},\ldots,x_{i_{1}})(x_{i_{m-1}+1},\ldots,x_{i_{m}})(x_{i_{m}+1},\ldots,x_{j_{m}})^2\subseteq I^2\setminus J^2$,
\item[(5)] $(x_{1},\ldots,x_{i_{1}})(x_{i_{m-1}+1},\ldots,x_{i_{m}})(x_{i_{m}+1},\ldots,x_{j_{m}})(x_{j_{m}+1},\ldots,x_{j_{1}})\subseteq I^2\setminus J^2 $,
\item[(6)] $(x_{i_{1}+1},\ldots,x_{i_{m-1}})(x_{i_{m}+1},\ldots,x_{j_{m}})(x_{i_{m-1}+1},\ldots,x_{i_{m}})^2\subseteq I^2\setminus J^2 $,
\item[(7)] $(x_{i_{1}+1},\ldots,x_{i_{m-1}})(x_{j_{m}+1},\ldots,x_{j_{m-1}})(x_{i_{m-1}+1},\ldots,x_{i_{m}})^2\subseteq I^2\setminus J^2$.
\end{itemize}
Hence, one has
\begin{itemize}
\item[(a)] $ \mu(I)-\mu(J)={b+1\choose 2}+bc$,
\item[(b)] $\mu(A_{m}^2)={b+3\choose 4}+{b+2\choose 3}c+{b+1\choose 2}{c+1\choose 2}$,
\item[(c)] $\mu((x_{1},\ldots,x_{i_{m-1}})(x_{i_{m-1}+1},\ldots,x_{i_{m}})^3)=(y+a){b+2\choose 3}$,
\item[(d)] $\mu((x_{1},\ldots,x_{i_{1}})(x_{i_{m}+1},\ldots,x_{j_{1}})(x_{i_{m-1}+1},\ldots,x_{i_{m}})^2)=y(c+d+e){b+1\choose 2}$,
\item[(e)] $\mu((x_{1},\ldots,x_{i_{1}})(x_{i_{m-1}+1},\ldots,x_{i_{m}})(x_{i_{m}+1},\ldots,x_{j_{m}})^2)=yb{c+1\choose 2}$,
\item[(f)] $\mu((x_{1},\ldots,x_{i_{1}})(x_{i_{m-1}+1},\ldots,x_{i_{m}})(x_{i_{m}+1},\ldots,x_{j_{m}})(x_{j_{m}+1},\ldots,x_{j_{1}}))\!=\!ybc(d+e)$,
\item[(g)] $\mu((x_{i_{1}+1},\ldots,x_{i_{m-1}})(x_{i_{m}+1},\ldots,x_{j_{m}})(x_{i_{m-1}+1},\ldots,x_{i_{m}})^2)=ac{b+1\choose 2}$,
\item[(h)] $\mu((x_{i_{1}+1},\ldots,x_{i_{m-1}})(x_{j_{m}+1},\ldots,x_{j_{m-1}})(x_{i_{m-1}+1},\ldots,x_{i_{m}})^2)=ad{b+1\choose 2}$.
\end{itemize}
Therefore, it follows from $(b)\sim (h)$ that
\begin{eqnarray*}
\mu(I^2)-\mu(J^2)&\geq&{b+3\choose 4}+{b+1\choose 2}{c+1\choose 2}+(y+a+c){b+2\choose 3}+yb{c+1\choose 2}\\
&+&y(c+d+e){b+1\choose 2}+ybc(d+e)+a(c+d){b+1\choose 2}.\hspace{1.0cm}(2)
\end{eqnarray*}
Let $\Delta(I)\!:=\mu(I^2)-\ell(I)\mu(I)+{\ell(I)\choose 2}$, $\Delta(J)\!:=\mu(J^2)-\ell(J)\mu(J)+{\ell(J)\choose 2}$. By Lemma \ref{analytic spread}, we have $\ell(I)=\ell(J)=y+a+b+c+d+e$. It follows from (a) and (2)
that
\begin{eqnarray*}
\Delta(I)-\Delta(J)&=&[\mu(I^2)-\mu(J^2)]-\ell(I)[\mu(I)-\mu(J)]\\
&\geq & y[{b+2\choose 3}+{b+1\choose 2}(c+d+e-1)+b{c+1\choose 2}+bc(d+e-1)]\\
&+&(a+c){b+2\choose 3}+ac{b+1\choose 2}+{b+3\choose 4}+{b+1\choose 2}{c+1\choose 2}\\
&+&[ad-(a+b+c+d+e)]{b+1\choose 2}-(a+b+c+d+e)bc.
\end{eqnarray*}
Let the function on the right in the inequality above  be $f(y,a,b,c,d,e)$ and when $y\geq 2$, we
set $\Delta [f(y,a,b,c,d,e)]\!:=f(y,a,b,c,d,e)-f(y-1,a,b,c,d,e)$.
Then
\[
\Delta [f(y,a,b,c,d,e)]={b+2\choose 3}+{b+1\choose 2}(c+d+e-1)+b{c+1\choose 2}+bc(d+e-1).
\]
It follows that $\Delta f(y,a,b,c,d,e)>0$ since $b,d>0$  and $a,c,e\ge 0$, which shows that $f(y,a,b,c,d,e)$ is a strictly monotone increasing function of variable $y$.
Hence
\begin{eqnarray*}
f(y,a,b,c,d,e)&\geq& f(1,a,b,c,d,e)\\
&=&{b+2\choose 3}+{b+1\choose 2}(c+d+e-1)+b{c+1\choose 2}+bc(d+e-1)\\
&+&(a+c){b+2\choose 3}+ac{b+1\choose 2}+{b+3\choose 4}+{b+1\choose 2}{c+1\choose 2}\\
&+&[ad-(a+b+c+d+e)]{b+1\choose 2}-(a+b+c+d+e)bc\\
&=&{b+2\choose 3}(a+c+1)+{b+1\choose 2}(ad+ac-a-b-1)+{b+3\choose 4}\\
&+&b{c+1\choose 2}+{b+1\choose 2}{c+1\choose 2}-(a+b+c+1)bc.
\end{eqnarray*}
Let $g(a,b,c,d,e)\!:=f(1,a,b,c,d,e)$ and the first difference function $\Delta$ is defined by $\Delta[ g(a,b,c,d,e)]\!:=g(a,b,c,d,e)-g(a,b-1,c,d,e)$ when $b\geq 2$.
 For $i>1$, $\Delta^i$ is defined by $\Delta^{i}[g(a,b,c,d,e)]\!:=\Delta^{i-1}[\Delta[g(a,b,c,d,e)]]$ when $b\geq i+1$. We obtain
\begin{eqnarray*}
\Delta [g(a,b,c,d,e)]&=&{b+1\choose 2}(a+1)+{c+1\choose 2}(b+1)+{b+2\choose 3}+{b+1\choose 2}c\\
&-&{b\choose 2}+abd-ab-b^{2}-ac-2bc-c^{2}-b+abc,\\
\Delta^{2} [g(a,b,c,d,e)]&=&{b+1\choose 2}+{c+1\choose 2}+a(b+c+d-1)+b(c-2)-2c+1,\\
\Delta^{3} [g(a,b,c,d,e)]&=&a+b+c-2.
\end{eqnarray*}
Hence, if $b\geq 4$, then $\Delta^{3}[g(a,b,c,d,e)]>0$, which forces that $\Delta^{2}[g(a,b,c,d,e)]$ is a strictly monotone increasing function of variable $b$.
Thus, when $b\geq 3$, one has
\[
\Delta^{2}[ g(a,b,c,d,e)]\geq \Delta^{2} g(a,3,c,d,e)=2a+ac+c+{c+1\choose 2}+ad+1>0.
\]
Hence $\Delta[g(a,b,c,d,e)]$ is strictly monotonic increasing  function of variable $b$. Thus
\[
\Delta [g(a,b,c,d,e)]\geq \Delta[g(a,2,c,d,e)]={c+1\choose 2}+a+ac+2ad.\hspace{3.0cm}(3)
\]
We consider the following two cases:

(i) If $a^2+c^2=0$, then $f(1,0,b,0,d,e)=\frac{1}{24}b(b+1)(b-1)(b-2)\geq 0$. In this case, $f(1,0,b,0,d,e)=0$ if and only if $b=1$ or $b=2$.

(ii) If $a^2+c^2\neq 0$, then, by (3), we have  $\Delta [g(a,b,c,d,e)]\geq c+a+ac+2ad>0$, i.e., $g(a,b,c,d,e)\geq g(a,1,c,d,e)=ad\geq 0$. In this case, since $d>0$, one has $f(1,a,b,c,d,e)=g(a,b,c,d,e)=0$ if and only if $a=0,b=1$.

\vspace{2mm}
In  short, $\Delta(I)-\Delta(J)\geq f(1,a,b,c,d,e)\geq 0$. \hspace{5.0cm}(4)

\vspace{2mm}
Since $I$ and $J$ are equigenerated monomial ideals, one has $\Delta(I),\Delta(J)\geq 0$. Thus $I$ is Freiman if and only if   $\Delta(I)=0$. It follows that
$\Delta(I)-\Delta(J)=0$ by  (4) if and only if  $f(y,a,b,c,d,e)=0$ if and only if  $y=b=1,a=0$, or $y=1,a=c=0,b=2$ if and only if   $I=B(x_{1}x_{j_{1}},x_{2}x_{j_{2}})$, $2\leq j_{2}< j_{1}\leq n$, or $I=B(x_{1}x_{j_{1}},x_{3}^2)$, $3\leq j_{1}\leq n$. The desired results from Lemmas \ref{case1} and \ref{case2}.
\end{proof}

\medskip

\section{Equigenerated Borel ideals of degree $d\geq 3$}

In this section, let $d\geq 3$ be an integer, we give some sufficient conditions for an equigenerated Borel ideal of degree $d\geq 3$ to be Freiman ideal.

Let $u_1,\ldots,u_m\in S$ be monomials of same degree. Then $B(u_1,\ldots,u_m)$ is Freiman if and only if $x_1^kB(u_1,\ldots,u_m)$ is Freiman for any $k$. We will use this fact several times in the proof.
From Theorem \ref{main1}, one has
\begin{Remark}\label{Rek1}
Let $I=B(u_1,\ldots,u_m)$ be a Borel ideal  of degree $d$ with Borel generators $u_1,\ldots,u_m$, where $u_\ell=x_1^{d-2}v_\ell$ and $v_\ell=x_{i_{\ell}}x_{j_{\ell}}$ with $1\leq i_\ell\leq j_\ell\leq n$ for $1\leq \ell\leq m$.  Then $I$ is Freiman if and only if $I=B(x_1^{d-1}x_{j_1},x_1^{d-2}x_2x_{j_2})$ with $2\leq j_{2}<j_1\leq n$, or $I=B(x_1^{d-1}x_{j_1},x_1^{d-2}x_3^2)$ with $3< j_1\leq n$.
\end{Remark}

Let $B(u_1,\ldots,u_m)$ be  a Borel ideal  of degree $d\geq 3$ with Borel generators $u_1,\ldots,u_m$, then its
 Borel generators  have no longer the forms of Borel generators in Lemma \ref{degree2} and  the approach of Theorem \ref{main1} is no longer valid for $d\geq 3$.
On the other hand, by checking hundreds of examples, we find that $B(u_1,\ldots,u_m)$ can be Freiman only when each principal Borel ideal $B(u_k)$ is Freiman.
Therefore, from now on,  we always assume that each $B(u_k)$ is Freiman. In this case, $u_k=x_1^{d-r}x_2^{r-1}x_{i_k}$ or $x_1^{d-2}x_3^2$, where $2\leq i_k\leq n$.

\begin{Theorem}\label{main2}
 Let  $u_k=x_1^{d-r_k}x_2^{r_k-1}x_{i_k}$ with $i_k\geq 2$ for  all $1\leq k\leq m$, and let $I=B(u_1,\ldots,u_m)$ be a Borel ideal of degree $d$ with Borel generators $u_1,\ldots,u_m$. Then $I$ is Freiman.
\end{Theorem}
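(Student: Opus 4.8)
The plan is to argue by induction on the number $m$ of Borel generators. First I would normalize the data: after discarding redundant generators we may assume that $\{u_1,\dots,u_m\}$ is a minimal system of Borel generators of $I$, and writing $u_k=x_1^{d-r_k}x_2^{r_k-1}x_{i_k}$ (so $1\le r_k\le d$, $i_k\ge 2$) one has $B(u_j)\subseteq B(u_k)$ precisely when $i_j\le i_k$ and $r_j\le r_k$; hence after relabelling $i_1<i_2<\cdots<i_m$ and $r_1>r_2>\cdots>r_m$. Set $r:=r_1$ and $N:=i_m$. From the order characterization of principal Borel ideals, a degree-$d$ monomial $v$ lies in $B(u_k)$ iff $v_1\ge d-r_k$, $v_1+v_2\ge d-1$ and $\supp(v)\subseteq\{1,\dots,i_k\}$. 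Consequently $G(I)$ consists exactly of the monomials $x_1^ax_2^b$ with $a+b=d$ and $a\ge d-r$, together with the monomials $x_1^ax_2^bx_c$ with $c\ge 3$, $a+b=d-1$ and $a\ge d-R(c)$, where $R(c):=\max\{r_k:i_k\ge c\}$ (no monomial of the second type occurs for $c>N$). Therefore $\mu(I)=(r+1)+\sum_{c=3}^{N}R(c)$, and $\ell(I)=N$ by Lemma \ref{analytic spread}.

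The base case $m=1$ is the principal Borel ideal $B(u_1)$, which is Freiman by \cite[Theorem 4]{HZ2}. For the inductive step put $J:=B(u_1,\dots,u_{m-1})$; then $\{u_1,\dots,u_{m-1}\}$ is again a minimal system of Borel generators of the same shape, so $J$ is Freiman by induction, $\ell(J)=i_{m-1}=:p$, and $\mu(J^2)=p\,\mu(J)-\binom p2$. Since $R(c)$ is unchanged when computed inside $J$ for $3\le c\le p$ and $\max_{k<m}r_k=r$, the description above gives $\mu(J)=(r+1)+\sum_{c=3}^{p}R(c)$ and $G(I)\setminus G(J)=E$, where $E$ is the set of monomials $x_1^ax_2^bx_c$ with $p<c\le N$, $a+b=d-1$ and $d-r_m\le a\le d-1$; in particular $\mu(I)=\mu(J)+r_m(N-p)$. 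Because $I$ and $J$ are equigenerated, every degree-$2d$ monomial of $I^2$ (resp.\ $J^2$) is a minimal generator, and a monomial of $I^2$ belongs to $J^2$ exactly when it is supported on $\{x_1,\dots,x_p\}$. Hence $\mu(I^2)-\mu(J^2)$ equals the number of distinct monomials $vw$ with $v\in E$ and $w\in G(I)$.

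To count these I would split them according to the tail $\prod_{j\ge 3}x_j^{(vw)_j}$: either (I) the tail is $x_c$ for a single $c$ with $p<c\le N$ (equivalently $\supp(w)\subseteq\{1,2\}$); or the tail has degree $2$, namely (II) $x_c^2$ with $p<c\le N$, (IIIa) $x_cx_{c'}$ with $p<c<c'\le N$, or (IIIb) $x_cx_{c'}$ with $3\le c\le p<c'\le N$. These four families are pairwise disjoint ((I) by its total $x_1,x_2$-degree $2d-1$, the others by the shape of the tail). In each family the tail may be chosen freely, and for a fixed tail the attainable $x_1$-exponents $a+a'$ form a full interval of integers whose length is read off from $a\in[d-r_m,d-1]$ (the factor from $E$) together with $a'\in[d-r,d]$ in case (I), respectively $a'\in[d-R(c),d-1]$ in the cases involving an ``old'' variable; this yields
\[
\#(\mathrm I)=(r+r_m)(N-p),\qquad \#(\mathrm{II})=(2r_m-1)(N-p),\qquad \#(\mathrm{IIIa})=(2r_m-1)\binom{N-p}{2},
\]
\[
\#(\mathrm{IIIb})=(N-p)\sum_{c=3}^{p}\bigl(R(c)+r_m-1\bigr).
\]
Adding these and inserting $\mu(J)=(r+1)+\sum_{c=3}^{p}R(c)$ and $\mu(I)=\mu(J)+r_m(N-p)$, a short algebraic simplification gives $\mu(I^2)-\mu(J^2)=N\mu(I)-\binom N2-\bigl(p\,\mu(J)-\binom p2\bigr)$; together with $\mu(J^2)=p\,\mu(J)-\binom p2$ this yields $\mu(I^2)=N\mu(I)-\binom N2=\ell(I)\mu(I)-\binom{\ell(I)}2$, so $I$ is Freiman.

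The hardest part is the bookkeeping in the counting step: one has to verify, family by family, that the stated interval of $x_1$-exponents is exactly correct (nothing outside occurs, and every interior value is realized by an admissible factorization $vw$ with $v\in E$, $w\in G(I)$), and that the four families are genuinely disjoint and jointly exhaust $I^2\setminus J^2$ without repetition. The degenerate configurations --- $i_1=2$ (so $u_1=x_1^{d-r_1}x_2^{r_1}$ has no third variable), $p=2$, or $N-p=1$ --- merely make some of the families empty and do not affect the final identity, which then reduces to a routine computation.
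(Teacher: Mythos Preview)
Your argument is correct, and the bookkeeping in cases (I)--(IIIb) checks out: the four families are disjoint (by tail shape and by $x_1,x_2$-degree), they exhaust the products $vw$ with $v\in E$ and $w\in G(I)$, and in each case the set of attainable $x_1$-exponents is a sumset of two integer intervals and hence itself an interval of the stated length. The final algebraic identity does collapse to $\mu(I^2)=N\mu(I)-\binom{N}{2}$.

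Your route, however, is genuinely different from the paper's. You induct on the number $m$ of Borel generators: you remove the generator $u_m$ with the \emph{largest} third index $i_m=N$, set $J=B(u_1,\dots,u_{m-1})$ (with $\ell(J)=p<N$), and directly count the monomials in $I^2\setminus J^2$. The paper, by contrast, first normalizes so that some $r_k=d$ and orders the generators oppositely ($r_1<\cdots<r_m=d$, $i_1>\cdots>i_m$); it then peels off the layer $I_2=x_2^{d-1}(x_2,\dots,x_{i_m})$ (here $i_m$ is the \emph{smallest} index), shows $\Delta(I)=\Delta(I_1)$ with $I_1=I\setminus I_2$, recognizes $I_1$ as $x_1$ times a Borel ideal of the same shape in degree $d-1$, and concludes by induction on $d$. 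In short: you strip off the ``widest'' generator and keep the degree fixed; the paper strips off the ``tallest'' layer and drops the degree by one. Your approach is more self-contained (no appeal to the degree-$2$ or degree-$3$ base cases), at the cost of a heavier direct count; the paper's approach avoids the explicit enumeration of $I^2\setminus J^2$ but relies on the structural decomposition $I=J+x_1(x_1,x_2)^{d-2}(x_1,\dots,x_{i_m})+x_2^{d-1}(x_2,\dots,x_{i_m})$ and the observation that $\ell(I)=\ell(I_1)$.
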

\begin{proof} If $r_k<d$ for all $k$, then $I=x_1^{t}B(v_1,v_2,\ldots,v_m)$, where $t=\min\{d-r_k\!: 1\leq k\leq m\}$ and $v_k=u_k/x_1^t$ for any $k$. Thus there exists some $\ell$ such that $v_\ell=x_2^{d-t}x_{i_\ell}$ and $I$ is Freiman if and only if $B(v_1,v_2,\ldots,v_m)$ is  Freiman.
Therefore, we may assume  $r_k=d$ for some $k$.
 Let  $r_1< r_2< \cdots < r_m=d$ after relabelling of $u_1,\ldots,u_m$, then $i_1> i_2>\cdots> i_m\geq 2$. It follows that
\begin{eqnarray*}
	I&=&\sum\limits_{k=1}^mB(u_k)=B(u_m)+\sum\limits_{k=1}^{m-1}x_1^{d-r_k}(x_1,x_2)^{r_k-1}(x_1,\ldots,x_{i_k})\\
&=&B(u_m)+\sum\limits_{k=1}^{m-1}x_1^{d-r_k}(x_1,x_2)^{r_k-1}\left[(x_1,\ldots,x_{i_{k+1}})+(x_{i_{k+1}+1},\ldots,x_{i_k})\right]\\
&=&\sum\limits_{k=1}^{m-1}x_1^{d-r_k}(x_1,x_2)^{r_k-1}(x_{i_{k+1}+1},\ldots,x_{i_k})+(x_1,x_2)^{d-1}(x_1,\ldots,x_{i_m})\\
&=&J+[x_1(x_1,x_2)^{d-2}+(x_2^{d-1})](x_1,\ldots,x_{i_m})\\
&=&J+x_1(x_1,x_2)^{d-2}(x_1,\ldots,x_{i_m})+(x_2^{d-1})(x_2,\ldots,x_{i_m})
\end{eqnarray*}
where the fourth equality  holds  because of $x_1^{d-r_k}(x_1,x_2)^{r_k-1}\subseteq x_1^{d-r_{k+1}}(x_1,x_2)^{r_{k+1}-1}$ for any $1\leq k\leq m-1$ and $J=\sum\limits_{k=1}^{m-1}x_1^{d-r_k}(x_1,x_2)^{r_k-1}(x_{i_{k+1}+1},\ldots,x_{i_k})$.

Let $I_1\!:=J+x_1(x_1,x_2)^{d-2}(x_1,\ldots,x_{i_m})$, $I_2\!:=x_2^{d-1}(x_2,\ldots,x_{i_m})$. Since $x_1\in \supp(G(I_1))\setminus\supp(G(I_2))$, one has  $G(I_1)\cap G(I_2)=\emptyset$, $G(I_1^2)\cap G(I_2^2)=\emptyset$ and  $G(I_{1}I_{2})\cap G(I_2^2)=\emptyset$.
Thus
$$\mu(I_2)=\mu(I)-\mu(I_1)=i_m-1 \eqno(5)$$
and
\begin{eqnarray*}
	I^2&=&(I_1+I_2)^2=I_1^2+I_1I_2+I_2^2=I_{1}^2+[J+x_1(x_1,x_2)^{d-2}(x_1,\ldots,x_{i_m})]I_2+I_2^2\\
	&=&I_1^2+JI_2+x_1(x_1,x_2)^{d-2}(x_1,\ldots,x_{i_m})I_2+I_2^2\\
	&=&I_1^2+JI_2+x_1^2(x_1,x_2)^{d-2}I_2+x_1(x_1,x_2)^{d-2}(x_2,\ldots,x_{i_m})I_2+I_2^2\\
	&=&I_1^2+[J+x_1x_2^{d-2}(x_2,\ldots,x_{i_m})]I_2+I_2^2
\end{eqnarray*}
where the last equality  holds  because of $x_1^2(x_1,x_2)^{d-2}\subseteq J$.

Since $G(I_1^2)$, $G(I_2^2)$ and  $G((J+x_1x_2^{d-2}(x_2,\ldots,x_{i_m}))I_2)$ are pairwise disjoint,  for any  different $k,\ell$, $G(x_1x_2^{d-2}(x_2,\ldots,x_{i_m}))$, $G(x_1^{d-r_k}(x_1,x_2)^{r_k-1}(x_{i_{k+1}+1},\ldots,x_{i_k}))$ and $G(x_1^{d-r_\ell}(x_1,x_2)^{r_\ell-1}(x_{i_{\ell+1}+1},\ldots,x_{i_\ell}))$   are also pairwise disjoint. It follows
 that
\begin{eqnarray*}\mu(I^2)-\mu(I_1^2)&=&\mu((J+x_1x_2^{d-2}(x_2,\ldots,x_{i_m}))I_2)+\mu(I_2^2)\\
	&=&\mu(JI_2)+\mu(x_1x_2^{d-2}(x_2,\ldots,x_{i_m})I_2)+\mu(I_2^2)\\
&=&\sum_{k=1}^{m-1}(i_k-i_{k+1})(i_m-1)+2{i_m\choose 2}\\
&=&i_1(i_m-1).
\end{eqnarray*}
Let $\Delta(I)\!:=\mu(I^2)-\ell(I)\mu(I)+{\ell(I)\choose 2}$ and  $\Delta(I_1)\!:=\mu(J^2)-\ell(J)\mu(J)+{\ell(J)\choose 2}$. Then,
by Lemma \ref{analytic spread} and (5), one has
\[
\Delta(I)-\Delta(I_1)=\mu(I^2)-\mu(I_1^2)-\ell(I)[\mu(I)-\mu(I_1)]=0.
\] Therefore, $I$ is Freiman if and only if $I_1$ is Freiman.

We will show that $I_1$ is Freiman by induction on $d$.
 If $d=3$, then $I=B(x_1^2x_{i_1},\\ x_1x_2x_{i_2})
 =x_1B(x_1x_{i_1},x_2x_{i_2})$ with $i_1>i_2$, or $I=B(x_1^2x_{i_1},x_1x_2x_{i_2},x_2^2x_{i_3})=B(x_1^2x_{i_1},\\ x_1x_2x_{i_2})+x_2^2(x_{2},\cdots,x_{i_3})$ with $i_1>i_2>i_3$.
In the first case,  $I$ is Freiman by Remark \ref{Rek1}. It forces that in another case $I$ is also Freiman by above arguments.
Now, assume that $I=B(u_1,\ldots,u_m)$ where $u_m=x_2^{d-1}x_{i_m}$, then $r_1< r_2< \cdots <r_{m-1}<d$. Thus one has
\begin{eqnarray*}
I&=&B(u_1,\ldots,u_m)=B(u_1,\ldots,u_{m-1})+B(u_m)\\
	&=&B(u_1,\ldots,u_{m-1})+B(x_1x_2^{d-2}x_{i_m})+x_2^{d-1}(x_{2},\cdots,x_{i_m})\\
&=&x_1B(v_1,\ldots,v_{m-1},x_2^{d-2}x_{i_m})+x_2^{d-1}(x_{2},\cdots,x_{i_m}).
\end{eqnarray*}
 where the third equality  holds  because of $B(u_m)=B(x_1x_2^{d-2}x_{i_m})+x_2^{d-1}(x_{2},\cdots,x_{i_m})$ and  $v_k=u_k/x_1$ for any $1\leq k\leq m-1$.
 By induction hypothesis, one has  $x_1B(v_1,v_2,\ldots,v_{m-1},x_2^{d-2}x_{i_m})$ is Freiman. It forces that $I$ is Freiman
  by  above arguments.
\end{proof}

The following example shows that when $x_1^{d-2}x_3^2$ is  one of the Borel generators of $B(u_1,\ldots,u_m)$, we can not determine whether $B(u_1,\ldots,u_m)$ is Freiman.
\begin{Example}
	For any integer $d\geq 2$, $B(x_1^{d-1}x_4,x_1^{d-2}x_3^2)$ is Freiman by Remark \ref{Rek1}.
	However, Borel ideal $B(x_1x_3^2,x_2^2x_4)=(x_1^3,x_1^2x_2,x_1^2x_3,x_1^2x_4,x_1x_2^2,x_1x_2x_3,x_1x_2x_4,x_1x_3^2,\\
   x_2^3,x_2^2x_3,x_2^2x_4)$. Thus $\mu(I)=11$ and $\ell(I)=4$ by Lemma \ref{analytic spread}.  By using CoCoA, we obtain  $\mu(I^2)=41$. Hence
	$\mu(I^2)>\ell(I)\mu(I)-{\ell(I) \choose 2}=38$, i.e., $B(x_1x_3^2,x_2^2x_4)$
	is not Freiman.
\end{Example}

\medskip

\section{Principal $k$-Borel ideals}

In this section,  we first show that  principal $k$-Borel ideals are sortable ideals and then, appling \cite[Theorem 3]{HZ2}, we will
give a complete classification of Freiman principal $k$-Borel ideals of degree $d$ if   $k=2$ or  $k=d-1$.

Let $d$ be a positive integer, $S_d$ the $K$-vector space generated by the monomials of degree $d$ in $S$, and take two monomials $u,v\in S_d$. We
write $uv=x_{i_1}x_{i_2}\cdots x_{i_{2d}}$ with $1\leq i_1\leq i_2\leq \cdots \leq i_{2d}\leq n$,  and define
$$u'=x_{i_1}x_{i_3}\cdots x_{i_{2d-1}},\quad \text{and} \quad v'=x_{i_2}x_{i_4}\cdots x_{i_{2d}}.$$
The pair $(u',v')$ is called the {\it sorting} of $(u,v)$.
In this way we obtain a map
\[
\sort: S_d\times S_d \to S_d\times S_d,\ (u, v)\mapsto (u', v').
\]
 A pair $(u,v)$ is called to be  {\em sorted} if $\sort(u,v)=(u,v)$ or
$\sort(u,v)=(v,u)$, otherwise it is called to {\em unsorted}. Notice that $\sort(u,v)=\sort(v,u)$.

\begin{Definition} \label{HHF}{\em A subset $B\subset S_d$ of monomials is called {\em sortable} if
$\sort(B\times B)\subset B\times B$}. An equigenerated monomial ideal $I\subset S$ is said to be   {\em sortable}, if $G(I)$ is a sortable set.
\end{Definition}

For a monomial ideal $I$, we set $I^{\leq k}=(u\in G(I): \ u\  \textrm{is $k$-bounded})$.

\begin{Theorem}\label{sort}  Let $u$ be a $k$-bounded monomial, and  $I=B_{k}(u)$  a  $k$-Borel ideal, then $I$ is  sortable.
\end{Theorem}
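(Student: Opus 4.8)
The goal is to show that $G(I)$ is sortable, i.e.\ that whenever $v,w \in G(I)$, the two sorted monomials $v',w'$ obtained from $vw$ again lie in $I$. The natural strategy is to exploit a known description of the generators of a principal $k$-Borel ideal in terms of the exponent vectors, and then argue combinatorially on the multiset of indices appearing in $vw$. First I would recall (from \cite{HMRZ}) the characterization of membership in $B_k(u)$: writing $u = x_1^{a_1}\cdots x_n^{a_n}$, a $k$-bounded monomial $w = x_1^{b_1}\cdots x_n^{b_n}$ of the same degree lies in $B_k(u)$ exactly when the partial sums satisfy $\sum_{i=1}^{t} b_i \ge \sum_{i=1}^{t} a_i$ for all $t$ (a ``prefix domination'' condition), possibly combined with the $k$-boundedness of $w$ itself. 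This is the $k$-Borel analogue of the classical fact that $B(u)$ consists of the monomials dominating $u$ in the partial-sum order, and it is the key technical input.

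With that in hand, the plan is: given $v,w \in G(I)$, write $vw = x_{i_1}\cdots x_{i_{2d}}$ with $i_1 \le \cdots \le i_{2d}$, and set $v' = x_{i_1}x_{i_3}\cdots x_{i_{2d-1}}$, $w' = x_{i_2}x_{i_4}\cdots x_{i_{2d}}$. I must check two things for each of $v',w'$: (1) that it is $k$-bounded, and (2) that it prefix-dominates $u$. For $k$-boundedness: if some index $\ell$ occurred more than $k$ times in $v'$, then since the occurrences of $\ell$ in $vw$ at positions with odd index alternate with those at even index, $\ell$ would occur more than $k$ times in one of $v$ or $w$, contradicting that $v,w$ are $k$-bounded; the same applies to $w'$. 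So the $k$-boundedness of the sorted pair is automatic and is exactly where the restriction ``$k$-bounded'' in the definition of $k$-Borel is used. For the prefix-domination: for any threshold $t$, the number of indices among $i_1,\dots,i_{2d}$ that are $\le t$ equals $(\text{number in }v \text{ that are} \le t) + (\text{number in }w\text{ that are}\le t) \ge 2\sum_{i \le t} a_i$, using that both $v$ and $w$ dominate $u$. Since $v'$ picks up the odd-positioned entries and $w'$ the even-positioned ones among the sorted list, the count of entries $\le t$ in $v'$ is $\lceil (\#\{j: i_j \le t\})/2\rceil \ge \sum_{i\le t} a_i$, and likewise for $w'$; hence both $v'$ and $w'$ prefix-dominate $u$. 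Combining (1) and (2) gives $v',w' \in B_k(u) = I$, so $\sort(G(I)\times G(I)) \subseteq G(I)\times G(I)$, proving sortability.

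\textbf{Main obstacle.} The only delicate point is establishing (or quoting in the precisely correct form) the prefix-domination characterization of $B_k(u)$ together with the role of $k$-boundedness. In the classical Borel case this is a clean and well-known criterion, but in the $k$-bounded setting the smallest $k$-Borel ideal containing $u$ is obtained by repeatedly applying the exchanges $x_i(u'/x_j) \mapsto$ itself only when the result stays $k$-bounded, so one should verify that the resulting set of generators is still exactly $\{w : w \text{ is } k\text{-bounded},\ \deg w = \deg u,\ \sum_{i\le t} b_i \ge \sum_{i\le t} a_i\ \forall t\}$ and not something smaller. I would either cite \cite{HMRZ} for this description or include a short lemma proving it by a straightforward induction on the number of exchange moves. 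Once that description is in place, the sorting argument above is purely a counting argument on sorted sequences and involves no further difficulty.
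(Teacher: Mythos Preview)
Your proposal is correct and follows essentially the same route as the paper. The paper invokes the identity $B_k(u) = B(u)^{\le k}$ from \cite{HMRZ} and then verifies sortability of $B(u)$ via the equivalent criterion ``$j_t \le i_t$ for all $t$'' on the sorted index sequences (which is just your prefix-domination condition rephrased), leaving the preservation of $k$-boundedness under sorting implicit; your version makes that $k$-boundedness check explicit but is otherwise the same counting argument on the merged sorted sequence.
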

\begin{proof} From \cite[Lemma 1.4]{HMRZ}, one has $I=B_{k}(u)=B(u)^{\leq k}$. Hence, it is enough to show that $B(u)$ is sortable.
Let $u=x_{i_1}x_{i_2}\cdots x_{i_d}$ with $i_1\leq i_2\leq \cdots \leq i_d$, and $v_1=x_{j_1}x_{j_2}\cdots x_{j_d}$,  $v_2=x_{\ell_1}x_{\ell_2}\cdots x_{\ell_d}\in G(B(u))$ with $j_1\leq j_2\leq \cdots \leq j_d$
and  $\ell_1\leq \ell_2\leq \cdots \leq \ell_d$. Then $j_t,\ell_t\leq i_t$ for any $t$. Let $v_1v_2=x_{s_1}x_{s_2}\cdots x_{s_{2d}}$ with $s_1\leq s_2\leq\cdots\leq s_{2d}$.
Then we have  $s_{2t-1}\leq s_{2t}\leq i_t$ for $1\leq t\leq d$. It follows that  $v'_1=x_{s_1}x_{s_3}\cdots x_{s_{2d-1}}\in G(B(u))$ and  $v'_2=x_{s_2}x_{s_4}\cdots x_{s_{2d}}\in G(B(u))$, as wished.
\end{proof}

\medskip
Given a sortable ideal $I$, we can  associate a graph, denoted by  the {\em sorted graph} $G_{I,s}$ of $I$, its vertex set is  $G(I)$ and edge set is
$E(G_{I,s})=\{\{u,v\}\: \text{$u\neq v$, $(u,v)$  is sorted}\}$.
The following lemma  is the main tool we will use later.

\begin{Lemma}	\label{judge}
	{\em (\cite[Theorem 3]{HZ2})}
		Let $I$ be a sortable ideal. Then $I$ is  Freiman  if and only if the sorted graph $G_{I,s}$ of $I$ is chordal. In particular, if $G_{I,s}$
	contains an induced  $t$-cycle with $t\geq 4$, then $I$ is not Freiman.
\end{Lemma}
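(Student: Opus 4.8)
The plan is to pass to the fiber cone $F(I)=K[G(I)]$ and invoke \cite[Theorem 2.3]{HHZ}: $I$ is Freiman if and only if $F(I)$ has minimal multiplicity, equivalently if and only if $F(I)$ is Cohen-Macaulay and its defining toric ideal $J$ (in a polynomial ring with one variable $y_u$ for each $u\in G(I)$) has a $2$-linear resolution. The combinatorial input is that, since $I$ is sortable, by Sturmfels' theorem on sortable sets (see \cite{HH1}) the sorting relations $y_uy_v-y_{u'}y_{v'}$ with $(u,v)$ unsorted form a Gr\"obner basis of $J$ for the sorting term order, so that $\ini(J)=(y_uy_v : u\neq v\in G(I),\ (u,v)\text{ unsorted})$ is exactly the edge ideal of the complementary graph $\overline{G_{I,s}}$, whose Stanley-Reisner complex is the clique complex $\Delta(G_{I,s})$ of the sorted graph. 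Hence $F(I)$ has the same Hilbert function, and therefore the same dimension and multiplicity, as $K[\Delta(G_{I,s})]$; in particular $\ell(I)=\dim F(I)=\omega(G_{I,s})$, the clique number. I would also record the bookkeeping identity coming from sorting: it induces a bijection between the minimal generators of $I^2$ and the sorted unordered pairs of elements of $G(I)$, whence
\[
\mu(I^2)=\mu(I)+|E(G_{I,s})|.
\]

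To prove ``$G_{I,s}$ chordal $\Rightarrow$ $I$ Freiman'' I would argue by a sandwich on $|E(G_{I,s})|$. On one hand, by \cite[Theorem 2.3]{HHZ} (or B\"or\"oczky et al.\ \cite{BSS}) one always has $\mu(I^2)\geq\ell(I)\mu(I)-\binom{\ell(I)}{2}$, i.e.\ $|E(G_{I,s})|\geq(\ell(I)-1)\mu(I)-\binom{\ell(I)}{2}$. On the other hand, a chordal graph with clique number $\omega$ has treewidth $\omega-1$, hence is a spanning subgraph of some $(\omega-1)$-tree, which has exactly $(\omega-1)|V|-\binom{\omega}{2}$ edges; applied to $G_{I,s}$ with $\omega=\ell(I)$ this yields $|E(G_{I,s})|\leq(\ell(I)-1)\mu(I)-\binom{\ell(I)}{2}$. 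The two inequalities force equality, so $\mu(I^2)=\ell(I)\mu(I)-\binom{\ell(I)}{2}$ and $I$ is Freiman. (Alternatively: chordality of $G_{I,s}$ gives, by Fr\"oberg's theorem, that $\ini(J)$ has a $2$-linear resolution, hence so does $J$; together with a separate argument for Cohen-Macaulayness of $F(I)$ this gives Freiman-ness again via \cite[Theorem 2.3]{HHZ}.) The ``in particular'' assertion is then immediate, since an induced $t$-cycle with $t\geq 4$ witnesses non-chordality of $G_{I,s}$.

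The hard direction is the converse, ``$I$ Freiman $\Rightarrow$ $G_{I,s}$ chordal'', and this is where I expect the main obstacle to be. From Freiman-ness one gets $|E(G_{I,s})|=(\ell(I)-1)\mu(I)-\binom{\ell(I)}{2}$ together with Cohen-Macaulayness of $F(I)$ and the $2$-linear resolution of $J$, and one must deduce chordality of $G_{I,s}$. This cannot be a formal graph-theoretic consequence of the edge count alone: for arbitrary graphs the equality $|E|=(\omega-1)|V|-\binom{\omega}{2}$ does not force chordality (e.g.\ a $4$-cycle plus an isolated vertex), and the clique complex of a chordal graph need not even be Cohen-Macaulay (e.g.\ the bowtie), so one must genuinely exploit the special structure of \emph{sorted} graphs. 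The approach I would take is to show that an induced $t$-cycle with $t\geq 4$ in $G_{I,s}$ propagates, via the sorting operation, to extra sorted pairs in $G(I)$ beyond those counted by the Freiman bound---equivalently, to extra minimal generators of $I^2$, or to a nonvanishing graded Betti number $\beta_{t-2,t}(F(I))$ that destroys the $2$-linear resolution---which contradicts Freiman-ness. Establishing this propagation, namely the structural description of sorted graphs that makes minimal multiplicity of $F(I)$ and chordality of $G_{I,s}$ equivalent, is the crux of the argument.
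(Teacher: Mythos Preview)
The paper does not prove this lemma; it is quoted from \cite[Theorem~3]{HZ2} and used as a black box. There is thus no in-paper argument to compare against.

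Your forward implication (chordal $\Rightarrow$ Freiman) is correct: the dictionary $\mu(I^2)=\mu(I)+|E(G_{I,s})|$ and $\ell(I)=\omega(G_{I,s})$ is right, and the $(\omega-1)$-tree edge bound sandwiched against the Freiman inequality closes it cleanly. For the alternative Fr\"oberg route you mention, note that Cohen--Macaulayness of $F(I)$ is in fact automatic for sortable $I$: the squarefree initial ideal forces normality of the semigroup ring by Sturmfels, hence Cohen--Macaulayness by Hochster; so no ``separate argument'' is needed there.

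The converse, however, is a genuine gap, and you say so yourself. Your own example ($C_4$ plus an isolated vertex) already shows that neither the edge-count equality nor the vanishing $h_{\ge 2}=0$ forces chordality for an arbitrary flag complex, so the sortable structure must enter essentially. The heuristic ``propagation of extra sorted pairs'' you sketch is not an argument. What is actually required is a transfer of regularity from $J$ to its initial ideal: Freiman gives that $J$ has a $2$-linear resolution, and one must conclude that $\ini(J)$ does too, whence $G_{I,s}$ is chordal by Fr\"oberg. Since the general inequality $\reg(J)\le\reg(\ini(J))$ points the wrong way, this step is nontrivial; it rests on the squarefreeness of $\ini(J)$ (the general form is now the Conca--Varbaro theorem, while \cite{HZ2} predates that and handles the sortable case directly). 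Without supplying this transfer step, your proposal does not establish the converse and hence does not prove the lemma.
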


\begin{Theorem}\label{$k$-Borel}
	Let $B(u)$ be a principal Borel ideal with Borel generator  $u$. If $B(u)$ is Freiman, then  $B_{k}(u)$ is Freiman for any $k$.
\end{Theorem}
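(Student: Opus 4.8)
The plan is to express the Freiman property through the combinatorics of the sorted graph and then argue that shrinking the ideal from $B(u)$ to $B_k(u)$ only deletes vertices of the sorted graph, so that chordality --- hence, by Lemma~\ref{judge}, the Freiman property --- is inherited. Concretely, by Theorem~\ref{sort} every principal $k$-Borel ideal is sortable, and $B(u)$ is sortable as well (it equals $B_k(u)$ for $k\ge\deg u$, and in any case its sortability is exactly what the proof of Theorem~\ref{sort} establishes), so Lemma~\ref{judge} applies to both $B(u)$ and $B_k(u)$: each is Freiman precisely when its sorted graph is chordal.

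Next I would pin down the relation between the two sorted graphs. Using \cite[Lemma 1.4]{HMRZ} we have $B_k(u)=B(u)^{\le k}$, and since $B(u)$ is equigenerated (Borel moves preserve degree) its minimal generating set $G(B(u))$ consists of all monomials obtainable from $u$ by Borel moves, while $G(B_k(u))$ is the subset of those that are $k$-bounded; in particular $G(B_k(u))\subseteq G(B(u))$. Because the map $\sort$ depends only on the pair of monomials, not on any ideal containing them, a pair $\{v_1,v_2\}$ of distinct elements of $G(B_k(u))$ is an edge of $G_{B_k(u),s}$ if and only if it is an edge of $G_{B(u),s}$. Hence $G_{B_k(u),s}$ is the \emph{induced} subgraph of $G_{B(u),s}$ on the vertex set $G(B_k(u))$.

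To finish, I would invoke the standard graph-theoretic fact that an induced subgraph of a chordal graph is chordal: a cycle of length $\ge 4$ in the subgraph is literally a cycle in the ambient graph, so it has a chord there, that chord joins two vertices of the subgraph, and since the subgraph is induced the chord is one of its edges. Since $B(u)$ is Freiman, $G_{B(u),s}$ is chordal by Lemma~\ref{judge}; therefore $G_{B_k(u),s}$ is chordal, and Lemma~\ref{judge} again yields that $B_k(u)$ is Freiman. The only delicate point is the ``induced subgraph'' assertion --- one must verify that no edge is lost when restricting to the smaller vertex set --- but this is immediate from the ideal-independence of $\sort$; everything else is a direct appeal to Lemma~\ref{judge}, Theorem~\ref{sort}, and \cite[Lemma 1.4]{HMRZ}.
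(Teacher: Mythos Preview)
Your proposal is correct and follows essentially the same approach as the paper: both argue that $G(B_k(u))\subseteq G(B(u))$, deduce that $G_{B_k(u),s}$ is the induced subgraph of $G_{B(u),s}$, and then use Lemma~\ref{judge} together with the hereditary nature of chordality. Your write-up is simply more explicit about why Lemma~\ref{judge} applies (sortability from Theorem~\ref{sort}) and why the subgraph is genuinely induced, points the paper takes for granted.
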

\begin{proof}
Since $G(B_{k}(u))\subseteq G(B(u))$, the sorted graph $G_{B_{k}(u),s}$ of $B_{k}(u)$ is the induced subgraph of the sorted graph $G_{B(u),s}$. If $B(u)$ is Freiman, then
$G_{B(u),s}$ is a chordal graph by Lemma \ref{judge}. Obviously, the induced subgraph $G_{B_{k}(u),s}$ is also a chordal graph. Again using Lemma \ref{judge}, we obtain that $B_{k}(u)$ is Freiman.
\end{proof}

Let  $B_k(u)$ be a principal $k$-Borel ideal of degree $d$. If $k\geq d$, then $B_k(u)=B(u)$. In this case, Herzog and the first author in  \cite[Theorem 4]{HZ2} answered when it is Freiman. If $k=1$, then $B_k(u)$ is 
a  $1$-spread Borel ideal. In  this case,  a complete classification  has been given in \cite{ZZC2}.  Next, we only focus on the cases $k=d-1$ and $k=2$.

\begin{Theorem}\label{degreed}
	Let  $u$ be a $(d-1)$-bounded monomial of degree $d$. Then  $(d-1)$-Borel ideal $B_{d-1}(u)$ is Freiman if and only if the Borel ideal  $B(u)$ is Freiman.
\end{Theorem}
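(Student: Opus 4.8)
The plan is to route everything through the sorted-graph criterion of Lemma~\ref{judge}. By Theorem~\ref{sort} the ideal $B(u)$ is sortable, and $B_{d-1}(u)=B(u)^{\leq d-1}$ is a $(d-1)$-Borel ideal hence also sortable, so each is Freiman exactly when its sorted graph is chordal. Since $B(u)$ and $B_{d-1}(u)$ are equigenerated of the same degree $d$, the minimal generators of $B_{d-1}(u)$ are precisely the $(d-1)$-bounded monomials in $G(B(u))$, and the edge relation of a sorted graph depends only on the monomial pair; therefore $G_{B_{d-1}(u),s}$ is the \emph{induced} subgraph of $G_{B(u),s}$ on this vertex subset. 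A degree-$d$ monomial fails to be $(d-1)$-bounded only when it is a pure power $x_i^d$, and $x_i^d\in G(B(u))$ exactly for $i\le p:=\min\supp(u)$. So the theorem reduces to the purely graph-theoretic statement: deleting the vertices $x_1^d,\dots,x_p^d$ from $G_{B(u),s}$ neither creates nor destroys chordality.

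First I would pin down the neighbourhood of a vertex $x_i^d$ in $G_{B(u),s}$. Sorting $x_i^d\cdot v$ for $v\in G(B(u))$ and imposing that the output be the pair $\{x_i^d,v\}$ forces $v$ to be of the form $x_i^{d-1}x_q$ with $q\neq i$; conversely every such $v$ in $G(B(u))$ is a neighbour. The next step is a short case split (according to whether the index $q$ lies below or above $i$, for the two neighbours being compared) showing that any two monomials $x_i^{d-1}x_q$ and $x_i^{d-1}x_r$ form a sorted pair. Hence $N(x_i^d)$ is a clique, i.e.\ $x_i^d$ is a \emph{simplicial} vertex of $G_{B(u),s}$. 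A further quick check shows $x_i^d$ and $x_j^d$ are never adjacent for $i\neq j$ (the interleaving step of the sort necessarily mixes the two index values), so deleting $x_j^d$ for $j\neq i$ leaves $N(x_i^d)$ unchanged. Consequently the vertices $x_1^d,\dots,x_p^d$ may be removed one at a time, each still simplicial at the moment of its removal.

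Finally I would invoke the elementary fact that deleting a simplicial vertex preserves chordality in both directions: an induced cycle of length $\ge 4$ can never pass through a simplicial vertex (its two cycle-neighbours would have to be adjacent), so $G$ contains such a cycle iff $G$ with a simplicial vertex removed does. Applying this $p$ times yields that $G_{B(u),s}$ is chordal iff $G_{B_{d-1}(u),s}$ is chordal, and Lemma~\ref{judge} converts this into ``$B(u)$ Freiman $\iff B_{d-1}(u)$ Freiman.'' (The forward implication ``$B(u)$ Freiman $\Rightarrow B_{d-1}(u)$ Freiman'' is already Theorem~\ref{$k$-Borel}; the genuine content here is the converse, which the simplicial-vertex argument supplies.)

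The step I expect to be the main obstacle is the neighbourhood computation for $x_i^d$: one has to track carefully how the interleaving in $\sort$ acts on a multiset of the shape $\{i^d\}\cup\{q_1,\dots,q_d\}$, and the precise form of the outcome depends mildly on the parity of $d$ and on the position of the indices of $v$ relative to $i$. Once it is established that each $x_i^d$ is simplicial (and that distinct $x_i^d$ are non-adjacent), the remainder is routine.
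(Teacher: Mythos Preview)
Your proposal is correct and follows essentially the same route as the paper: both identify the neighbours of $x_i^d$ in $G_{B(u),s}$ as the monomials $x_i^{d-1}x_q$, observe that any two such are a sorted pair, and conclude that no induced cycle of length $\geq 4$ can pass through a vertex $x_i^d$. Your framing via ``simplicial vertices'' and iterated deletion is a mild repackaging of the paper's direct argument that an induced long cycle in $G_{B(u),s}$ already lives in $G_{B_{d-1}(u),s}$, but the content is the same (and your extra remark that distinct $x_i^d$, $x_j^d$ are non-adjacent is true but not actually needed, since a subset of a clique is still a clique).
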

\begin{proof}
$(\Leftarrow)$ It follows from Theorem 	\ref{$k$-Borel}.

$(\Rightarrow)$ Since $B(u)$ is a Borel ideal, one has  $x_1^d\in B(u)$. Thus $G(B_{d-1}(u))=G(B(u))\setminus \{x_1^{d},x_2^{d},\ldots,x_m^{d}\}$  where $m=max\{i\!: x_i^d\in G(B(u))\}$.
For any  $1\leq j\leq m$ and any $v\in G(B(u))$, we set $k=\left\{\begin{array}{ll}
0\ &\text{if}\ \ x_j\nmid v\\
\max\{k\,: x_j^k|v\}\ & \text{otherwise}
\end{array}\right.$. By the definition of the sorted, we obtain that the pair $(v,x_j^d)$ is sorted if and only if  $k\geq d-1$. Hence
 $\{v,x_j^d\}$ is an edge of the  sorted graph $G_{B(u),s}$ of $B(u)$ if and only if $k=d-1$.

If  $B(u)$ is not Freiman, then $G_{B(u),s}$ contains an induced $t$-cycle $C$ of length $t\geq 4$. we label  its vertex as $v_1,v_2,\ldots,v_{t}$ in clockwise order. For any
 $1\leq j\leq m$, we claim $x_j^d\notin \{v_1,v_2,\ldots,v_{t}\}$. Thus  $C$ is also a  induced $t$-cycle of the sorted graph $G_{B_{d-1}(u),s}$, a contradiction.

The proof of the above assertion: If $x_j^d\in \{v_1,v_2,\ldots,v_{t}\}$, we may assume $v_1=x_j^d$. Then $v_2=x_j^{d-1}x_a$ and  $v_t=x_j^{d-1}x_b$ since $\{v_1,v_2\},\{v_1,v_t\}\in E(G_{B(u),s})$. This implies $\{v_2,v_t\} \in E(G_{B(u),s})$, which contradicts that $C$ is a circle of  length $\geq 4$.
\end{proof}

From the above theorem and \cite[Theorem 4]{HZ2}, one has the following result.

\begin{Corollary}\label{C1}
	Let  $u$ be a $(d-1)$-bounded monomial of degree $d$.
	\begin{enumerate}
		\item [(1)] If $d=2$, then $1$-Borel ideal $B_{1}(u)$ is Freiman if and only if $u=x_1x_i$ with $2\leq i\leq n$, or $u=x_2x_j$ with  $3\leq j\leq n$.
		\item[(2)] If $d=3$, then $2$-Borel ideal $B_{2}(u)$ is Freiman if and only if $u=x_1x_3^{2}$, $u=x_2x_3^{2}$, or  $u$  is a minimal  monomial  generators of
		$x_1^{2}(x_2,\ldots,x_n)$ or $x_{2}^{2}(x_3,\ldots,x_n)$ or   $x_{1}x_{2}(x_3,\ldots,x_n)$.
	\item[(3)] If $d\geq 4$, then $(d-1)$-Borel ideal $B_{d-1}(u)$ is Freiman if and only if  $u=x_{1}^{d-2}x_{3}^2$,  or $u$  is a minimal  monomial  generators of
     $x_1^{d-1}(x_2,\ldots,x_n)$ or $x_{2}^{d-1}(x_3,\ldots,x_n)$ or   $x_{1}^{d-r-1}x_{2}^{r}(x_3,\ldots,x_n)$ where $1\leq r\leq d-2$.
     \end{enumerate}
\end{Corollary}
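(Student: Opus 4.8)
The plan is to combine \Cref{degreed} with the classification of Freiman principal Borel ideals from \cite[Theorem 4]{HZ2}. By \Cref{degreed}, for a $(d-1)$-bounded monomial $u$ of degree $d$, the ideal $B_{d-1}(u)$ is Freiman if and only if $B(u)$ is Freiman; so all three parts reduce to: for which $(d-1)$-bounded $u$ of degree $d$ is the principal Borel ideal $B(u)$ Freiman? The answer to this is precisely \cite[Theorem 4]{HZ2}, which (as recalled elsewhere in this paper) states that $B(u)$ is Freiman if and only if $u=x_1^{d-2}x_3^2$ or $u$ is a minimal generator of $x_1^{d-r-1}x_2^r(x_3,\ldots,x_n)$ for some $0\le r\le d-1$. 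Thus the entire proof is a matter of specializing this list to $d=2$, $d=3$, and $d\ge 4$, and discarding from the list any monomial that fails to be $(d-1)$-bounded.

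First I would handle part (1), $d=2$. Here every monomial of degree $2$ is automatically $1$-bounded \emph{unless} it is a square $x_i^2$; and by \Cref{degreed}, $B_1(u)$ is Freiman iff $B(u)$ is Freiman. Applying \cite[Theorem 4]{HZ2} with $d=2$: the exceptional generator $x_1^{d-2}x_3^2 = x_3^2$ is not $1$-bounded, hence is excluded; and $u$ ranges over minimal generators of $x_1^{1-r}x_2^r(x_3,\ldots,x_n)$ for $r\in\{0,1\}$, i.e.\ over the monomials $x_1x_i$ ($i\ge 3$, together with $x_1x_2$ which corresponds to taking the ``$x_3$'' slot equal to $x_2$ — more precisely, $B(x_1x_2)=B(x_1x_3)\cap\cdots$; one should phrase this so that $i\ge 2$) and $x_2x_j$ ($j\ge 3$). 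Collecting these gives exactly the stated condition $u=x_1x_i$ with $2\le i\le n$ or $u=x_2x_j$ with $3\le j\le n$. (A minor subtlety: one must check that $x_1x_2$ and $x_2x_j$ with $j\ge 3$ are not themselves squares and hence are $1$-bounded — they are not, so no further exclusion occurs.)

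Next, parts (2) and (3) are the same specialization for $d=3$ and $d\ge 4$ respectively, now with $(d-1)$-boundedness, which for these values of $d$ only forbids the $d$-th power $x_i^d$. From \cite[Theorem 4]{HZ2}, the Freiman principal Borel generators of degree $d$ are $x_1^{d-2}x_3^2$ and the minimal generators of $x_1^{d-r-1}x_2^r(x_3,\ldots,x_n)$ for $0\le r\le d-1$. The monomial $x_1^{d-2}x_3^2$ is $(d-1)$-bounded since each exponent is $\le d-2<d-1$ (using $d\ge 3$), so it survives. Among the generators of $x_1^{d-r-1}x_2^r(x_3,\ldots,x_n)$, the only way to obtain a non-$(d-1)$-bounded monomial is to have an exponent equal to $d$: this happens exactly for $r=d-1$ and the variable in the $(x_3,\ldots,x_n)$ slot being $x_2$, giving $x_2^d$, which must be discarded; similarly the case $r=0$ with that slot being $x_1$ gives $x_1^d$, also discarded. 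After removing these, for $d=3$ the surviving families are $x_1^2(x_2,\ldots,x_n)$ (from $r=0$), $x_2^2(x_3,\ldots,x_n)$ (from $r=2$), $x_1x_2(x_3,\ldots,x_n)$ (from $r=1$), together with $x_1x_3^2$ and $x_2x_3^2$ — note $x_2x_3^2$ is the $r=2$, slot-$x_3$ case already in $x_2^2(x_3,\ldots,x_n)$? No: $x_2x_3^2$ has $x_2$-exponent $1$, not $2$, so it is the exceptional $x_1^{d-2}x_3^2$ with the leading variable shifted; one sees from \cite[Theorem 4]{HZ2} that both $x_1x_3^2$ and $x_2x_3^2$ appear, matching the statement. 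For $d\ge 4$ the exceptional generator is the single monomial $x_1^{d-2}x_3^2$ and the families are $x_1^{d-1}(x_2,\ldots,x_n)$, $x_2^{d-1}(x_3,\ldots,x_n)$, and $x_1^{d-r-1}x_2^r(x_3,\ldots,x_n)$ for $1\le r\le d-2$, exactly as claimed.

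The main obstacle is bookkeeping rather than mathematics: one must carefully reconcile the overlapping descriptions of the Freiman principal Borel ideals in \cite[Theorem 4]{HZ2} with the case split into $r=0$, $1\le r\le d-2$, $r=d-1$, decide in each case whether the resulting monomial is $(d-1)$-bounded, and verify that the ``exceptional'' monomials $x_1^{d-2}x_3^2$ (and, for small $d$, its shifts like $x_2x_3^2$) are correctly separated out from the main families. There is no numerical difficulty and no new estimate to prove: \Cref{degreed} does all the structural work, and what remains is to transcribe \cite[Theorem 4]{HZ2} three times with the boundedness constraint imposed.
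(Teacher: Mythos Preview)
Your approach is exactly the paper's: the corollary is stated immediately after \Cref{degreed} with the single line ``From the above theorem and \cite[Theorem 4]{HZ2}, one has the following result,'' and no further argument is given. Your write-up supplies the bookkeeping the paper omits; that bookkeeping gets tangled in places (the discussion of $x_2x_3^2$ in part~(2), and whether certain monomials like $x_1x_2$ fit into the $x_1^{d-r-1}x_2^r(x_3,\dots,x_n)$ families), but none of this reflects a missing idea---it is purely a matter of transcribing the list from \cite[Theorem 4]{HZ2} and intersecting with the $(d-1)$-bounded monomials, which is precisely what the paper intends.
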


The following example shows that the assumption in Theorem \ref{degreed} that  $u$ is a $(d-1)$-bounded monomial of degree $d$
cannot be replaced by the condition that $u$ is a $k$-bounded monomial of degree $d$ with $k\leq d-2$.
\begin{Example}
	Let $u=x_2x_3x_4$ be a $1$-bounded monomial, then the principal $1$-Borel ideal $B_1(u)
	=(x_1x_2x_3,x_1x_2x_4,x_1x_3x_4,x_2x_3x_4)$. Its  sorted graph  $G_{B_1(u),s}$ is shown in Figure $1$. Hence $B_1(u)$ is Freiman  by Lemma \ref{judge}.
	However, $B(x_2x_3x_4)$ is not Freiman by \cite[Theorem 4]{HZ2}.
\end{Example}
\begin{center}
	\setlength{\unitlength}{1.0mm}
	\begin{picture}(85,40)
		\linethickness{1pt}
		\thicklines
		\multiput(-25,11)(20,0){2}{\circle*{1}}
		\multiput(-25,31)(20,0){2}{\circle*{1}}
		\multiput(-25,11)(20,0){2}{\line(0,1){20}}
		\multiput(-25,11)(0,20){2}{\line(1,0){20}}
		\put(-25,11){\line(1,1){20}}
		\put(-25,31){\line(1,-1){20}}
		\put(-30,33){$x_1x_2x_3$}
		\put(-10,33){$x_1x_2x_4$}
		\put(-30,7){$x_1x_3x_4$}
		\put(-10,7){$x_2x_3x_4$}
		\put(-23,0){$Figure\ 1$}
		
		\multiput(17,13)(20,0){2}{\circle*{1}}
		\multiput(17,23)(20,0){2}{\circle*{1}}
		\put(27,33){\circle*{1}}
		\multiput(17,13)(20,0){2}{\line(0,1){10}}
		\multiput(17,13)(0,10){2}{\line(1,0){20}}
		\put(17,23){\line(1,1){10}}
		\put(27,33){\line(1,-1){10}}
		\put(17,13){\line(2,1){20}}
		\put(25,35){$x_1^2x_3^2$}
		\put(4,23){$x_1x_2x_3^2$}
		\put(9,8){$x_1x_2^2x_3$}
		\put(35,8){$x_1^2x_2^2$}
		\put(39,23){$x_1^2x_2x_3$}
		\put(19,0){$Figure\ 2$}
		
		\multiput(80,13)(10,0){2}{\circle*{1}}
		\multiput(70,23)(30,0){2}{\circle*{1}}
		\multiput(80,33)(10,0){2}{\circle*{1}}
		\put(70,23){\line(1,1){10}}
		\put(70,23){\line(1,-1){10}}
		\put(70,23){\line(2,1){20}}
		\put(70,23){\line(1,0){30}}
		\put(80,13){\line(0,1){20}}
		\put(80,13){\line(2,1){20}}
		\put(80,13){\line(1,0){10}}
		\put(100,23){\line(-1,1){10}}
		\put(100,23){\line(-1,-1){10}}
		\put(57,23){$x_1x_2x_3^2$}
		\put(75,8){$x_1x_2^2x_3$}
		\put(90,8){$x_1^2x_2^2$}
		\put(101,23){$x_1^2x_2x_3$}
		\put(87,35){$x_1^2x_3^2$}
		\put(77,35){$x_2^2x_3^2$}
		\put(78,0){$Figure\ 3$}
	\end{picture}
\end{center}

\medskip
\begin{Lemma}\label{isomorphic}
	Let  $u=x_1^kx_{i_{1}}x_{i_{2}}\cdots x_{i_{d-k}}$ be a $k$-bounded monomial of degree $d$, where $2\leq i_1\leq i_2\leq \cdots\leq  i_{d-k}$. Define a map $\psi_k$ as follow:
	\begin{align*}
		\psi_k:S&\longrightarrow S,\\
		u&\longmapsto x_{i_1-1}x_{i_2-1}\cdots x_{i_{d-k}-1}.
	\end{align*}
	Then $B_k(u)$ is Freiman if and only if $B_k(\psi_k(u))$ is Freiman.
\end{Lemma}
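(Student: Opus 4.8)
The plan is to reduce the statement to the chordality criterion of Lemma~\ref{judge}. Both $u$ and $\psi_k(u)$ are $k$-bounded (the multiplicity of an index $j$ in $\psi_k(u)$ equals the multiplicity of $j+1$ in $u$, hence is at most $k$), so $B_k(u)$ and $B_k(\psi_k(u))$ are principal $k$-Borel ideals and, by Theorem~\ref{sort}, both are sortable. By Lemma~\ref{judge} it therefore suffices to produce an isomorphism of sorted graphs $G_{B_k(u),s}\cong G_{B_k(\psi_k(u)),s}$, since chordality is preserved by graph isomorphism.

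First I would pin down the vertex sets explicitly. Since $B_k(u)=B(u)^{\leq k}$ by \cite[Lemma 1.4]{HMRZ} and $B(u)$ is equigenerated in degree $d$, the set $G(B_k(u))$ consists of all $k$-bounded degree-$d$ monomials lying below $u$ in the Borel order; writing monomials via their weakly increasing exponent sequences, a degree-$d$ monomial $w=x_{a_1}\cdots x_{a_d}$ with $a_1\leq\cdots\leq a_d$ lies in $B(u)$ precisely when $a_t\leq c_t$ for all $t$, where $u=x_{c_1}\cdots x_{c_d}$ with $c_1=\cdots=c_k=1$ and $c_{k+j}=i_j$. The inequalities $a_t\leq 1$ for $t\leq k$ force $a_1=\cdots=a_k=1$, and $k$-boundedness then forbids a further factor $x_1$; hence every $w\in G(B_k(u))$ is uniquely of the form $w=x_1^k m$ with $m$ a $k$-bounded monomial of degree $d-k$, $x_1\nmid m$, and increasing exponent sequence dominated entrywise by $(i_1,\ldots,i_{d-k})$. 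Writing $\psi_k(m)$ for the monomial obtained from such an $m$ by lowering every variable index by one (so the $\psi_k(u)$ of the statement is $\psi_k$ of the non-$x_1$ part of $u$ in this sense), the assignment $\Phi(x_1^k m):=\psi_k(m)$ is a well-defined bijection $G(B_k(u))\to G(B_k(\psi_k(u)))$: the constraint $a_j\leq i_j$ becomes $a_j-1\leq i_j-1$, the constraint $x_1\nmid m$ becomes the vacuous $a_j-1\geq 1$, and $k$-boundedness is preserved, all reversibly.

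The crux is to check that $\Phi$ respects the edge relation, i.e., that $(x_1^k m_1,x_1^k m_2)$ is sorted if and only if $(\psi_k(m_1),\psi_k(m_2))$ is sorted. I would verify this by computing both sortings directly. Since $x_1\nmid m_1m_2$, the product $x_1^k m_1\cdot x_1^k m_2=x_1^{2k}m_1m_2$ has sorted word equal to $2k$ leading copies of $x_1$ followed by the sorted word of $m_1m_2$; because $2k$ is even, $\sort$ sends exactly $k$ of those copies of $x_1$ into each of its two output words and leaves the two interleaved halves of the sorted word of $m_1m_2$ unchanged, so $\sort(x_1^k m_1,x_1^k m_2)=(x_1^k m_1',x_1^k m_2')$ where $(m_1',m_2')=\sort(m_1,m_2)$. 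The same bookkeeping applied to $\psi_k(m_1)\psi_k(m_2)=\psi_k(m_1m_2)$, whose sorted word is the sorted word of $m_1m_2$ with every index lowered by one, gives $\sort(\psi_k(m_1),\psi_k(m_2))=(\psi_k(m_1'),\psi_k(m_2'))$. Since $\psi_k$ is injective on monomials, it follows that both $(x_1^k m_1,x_1^k m_2)$ being sorted and $(\psi_k(m_1),\psi_k(m_2))$ being sorted are equivalent to $\{m_1',m_2'\}=\{m_1,m_2\}$, and also $x_1^k m_1=x_1^k m_2$ iff $\psi_k(m_1)=\psi_k(m_2)$.

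Hence $\Phi$ is an isomorphism between $G_{B_k(u),s}$ and $G_{B_k(\psi_k(u)),s}$, so one sorted graph is chordal if and only if the other is; applying Lemma~\ref{judge} to the two sortable ideals $B_k(u)$ and $B_k(\psi_k(u))$ then shows that $B_k(u)$ is Freiman if and only if $B_k(\psi_k(u))$ is Freiman. The step I expect to be most delicate is the sorting computation above — tracking the block of $2k$ equal letters and showing that it splits evenly under $\sort$ and decouples from the remaining letters — together with the (routine) fact that every minimal generator of $B_k(u)$ carries exactly $k$ factors $x_1$; the remainder is a transport of structure along $\Phi$.
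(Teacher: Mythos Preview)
Your proposal is correct and follows essentially the same approach as the paper: both argue that every generator of $B_k(u)$ has the form $x_1^k m$ with $x_1\nmid m$, that the map stripping the $x_1^k$ and lowering all indices by one gives a bijection $G(B_k(u))\to G(B_k(\psi_k(u)))$ preserving the sorted relation, and then invoke Lemma~\ref{judge}. The paper states these steps tersely (``it is clear that the pair $(v_1,v_2)$ is sorted if and only if the pair $(\psi_k(v_1),\psi_k(v_2))$ is sorted''), whereas you carry out the sorting computation and the vertex-set identification in full; your added care is appropriate but does not represent a different route.
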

\begin{proof} For any $v_1,v_2\in G(B_k(u))$, one has  $x_1^k|\gcd(v_1,v_2)$.
Hence we may assume that $v_1=x_1^kx_{j_{1}}x_{j_{2}}\cdots x_{j_{d-k}}$,  $v_2=x_1^kx_{\ell_{1}}x_{\ell_{2}}\cdots x_{\ell_{d-k}}$ with $2\leq j_1\leq j_2\leq \cdots\leq  j_{d-k}$ and $2\leq \ell_1\leq \ell_2\leq \cdots\leq \ell_{d-k}$.
It is clear that the pair $(v_1,v_2)$ is sorted if and only if the pair $(\psi_k(v_1),\psi_k(v_2))$ is sorted by the definition of $\psi_k$. Hence the sorted graphs $G_{B_k(u),s}$ and~$G_{B_k(\psi_k(u)),s}$ are isomorphic. The desired result follows from  Lemma \ref{judge}.
\end{proof}

According to the above theorem, suppose that $u$ is a $k$-bounded monomial. Next, we usually assume  $x_1^k\nmid u$.

\begin{Theorem}\label{d=4}
	Let  $u$ be a $2$-bounded monomial of degree $4$. Then $B_2(u)$ is Freiman if and only if $u=x_1x_2x_3^2$, or $u=x_2^2x_3^2$, or $u=x_1x_2^2x_{i_4}$ with  $ i_4\geq 3$.
\end{Theorem}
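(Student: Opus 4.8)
The plan is to use Theorem~\ref{sort} together with Lemma~\ref{judge}: since $B_2(u)$ is a principal $2$-Borel ideal it is sortable, so $B_2(u)$ is Freiman if and only if its sorted graph $G_{B_2(u),s}$ is chordal, and this is the criterion the whole proof runs on. As fixed before the statement, we assume $x_1^2\nmid u$. I will also use the elementary observation that if a monomial $v$ lies in $B(u)$ then $B(v)\subseteq B(u)$, hence $B_2(v)=B(v)^{\leq 2}\subseteq B(u)^{\leq 2}=B_2(u)$ by \cite[Lemma 1.4]{HMRZ}, so that $G_{B_2(v),s}$ is an \emph{induced} subgraph of $G_{B_2(u),s}$; consequently, if $G_{B_2(v),s}$ has an induced cycle of length $\geq 4$, so does $G_{B_2(u),s}$.

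For the ``if'' direction I would compute the minimal generators and verify chordality of the sorted graph in each of the three cases. When $u=x_1x_2x_3^2$ the ideal has the $5$ generators $x_1^2x_2^2,x_1^2x_2x_3,x_1^2x_3^2,x_1x_2^2x_3,x_1x_2x_3^2$, and when $u=x_2^2x_3^2$ it has these together with $x_2^2x_3^2$; in both cases a direct check (these are the graphs in Figure~2 and Figure~3) shows $G_{B_2(u),s}$ is chordal. For the infinite family $u=x_1x_2^2x_{i_4}$ with $i_4\geq 3$, one computes $G(B_2(u))=\{x_1^2x_2x_j:\ 2\leq j\leq i_4\}\cup\{x_1x_2^2x_j:\ 3\leq j\leq i_4\}$, and sorting pairs shows that each of these two sets spans a clique of $G_{B_2(u),s}$, while $x_1^2x_2x_a$ and $x_1x_2^2x_b$ are adjacent precisely when $a\leq b$. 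Any induced cycle of length $\geq 4$ is triangle-free, hence meets each of the two cliques in at most two vertices, so it must have length exactly $4$, with vertices $x_1^2x_2x_{a_1},x_1^2x_2x_{a_2}$ from the first clique and $x_1x_2^2x_{b_3},x_1x_2^2x_{b_4}$ from the second. Its edge and non-edge relations then give $a_1\leq b_4$, $a_2\leq b_3$, $a_1>b_3$, $a_2>b_4$, i.e.\ $a_1\leq b_4<a_2\leq b_3<a_1$, a contradiction; hence $G_{B_2(x_1x_2^2x_{i_4}),s}$ is chordal and $B_2(x_1x_2^2x_{i_4})$ is Freiman.

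For the ``only if'' direction the key point is that $B_2(x_1x_2x_3x_4)$ is \emph{not} Freiman. Its $9$ minimal generators can be listed explicitly, and computing the pairwise sortings one checks that $\{x_1x_2x_3x_4,\ x_1^2x_2x_4,\ x_1^2x_2^2,\ x_1x_2^2x_3\}$ induces a $4$-cycle in $G_{B_2(x_1x_2x_3x_4),s}$: the four pairs of cyclically adjacent vertices (in the order written) are sorted, whereas the two diagonal pairs both have product $x_1^3x_2^3x_3x_4$, whose sorting is $(x_1^2x_2x_3,\ x_1x_2^2x_4)$, so neither diagonal is an edge. By Lemma~\ref{judge}, $B_2(x_1x_2x_3x_4)$ is not Freiman. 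It then remains to show that every admissible $u$ not on the list satisfies $x_1x_2x_3x_4\in B(u)$; granting this, $B_2(x_1x_2x_3x_4)\subseteq B_2(u)$, so $G_{B_2(u),s}$ contains the $4$-cycle above and $B_2(u)$ is not Freiman. Writing $u=x_{c_1}x_{c_2}x_{c_3}x_{c_4}$ with $c_1\leq c_2\leq c_3\leq c_4$, one always has $c_1\geq 1$, and $c_2\geq 2$ since $x_1^2\nmid u$; the claim thus reduces to $c_3\geq 3$ and $c_4\geq 4$. A short case analysis finishes it: if $c_3\leq 2$ then $c_2=c_3=2$ and $2$-boundedness forces $c_1=1$, so $u=x_1x_2^2x_{c_4}$ with $c_4\geq 3$, which is on the list; if $c_4\leq 3$ then $c_3=c_4=3$ and $2$-boundedness forces $c_2=2$, so $u=x_1x_2x_3^2$ or $u=x_2^2x_3^2$, again on the list.

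The main obstacle, to my mind, is making the reduction exhaustive and clean: one must be sure that the three monomials in the statement are precisely the admissible $u$ whose generators do not already force $x_1x_2x_3x_4$ into the ideal, and that the $4$-cycle produced in $G_{B_2(x_1x_2x_3x_4),s}$ is genuinely induced rather than merely a cycle with a chord; both reduce to careful bookkeeping with sorted exponent vectors and $2$-boundedness. The only genuinely non-finite step is the chordality of the family $x_1x_2^2x_{i_4}$, which the triangle-free argument above handles uniformly in $i_4$.
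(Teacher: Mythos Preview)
Your proof is correct and shares the paper's core idea: both use the sorted-graph criterion of Lemma~\ref{judge} and exhibit the very same induced $4$-cycle $\{x_1^2x_2^2,\ x_1^2x_2x_4,\ x_1x_2x_3x_4,\ x_1x_2^2x_3\}$ to handle the ``only if'' direction. The packaging differs in two places. For the infinite family $u=x_1x_2^2x_{i_4}$, the paper appeals to \cite[Theorem~4(c)]{HZ2} to conclude that $B(u)$ is Freiman and then invokes Theorem~\ref{$k$-Borel}; you instead compute $G(B_2(u))$ explicitly and give a direct two-clique chordality argument, which makes this step self-contained and is essentially a bipartite instance of Lemma~\ref{chordal}. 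For the ``only if'' direction, the paper enumerates seven shapes of $u$ and checks in each that the four cycle vertices lie in $G(B_2(u))$; you replace this enumeration by the single observation that $u$ not on the list forces $c_3\geq 3$ and $c_4\geq 4$, hence $x_1x_2x_3x_4\in B(u)$, together with the clean induced-subgraph remark $G_{B_2(x_1x_2x_3x_4),s}\subseteq G_{B_2(u),s}$. Your route is a bit more structural and avoids the case list, while the paper's route is quicker to write once one is willing to cite \cite{HZ2}.
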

\begin{proof}$(\Leftarrow)$ Since   $G(B_2(x_1x_2x_3^2))\!=\!\{x_1^2x_2^2,x_1^2x_2x_3,x_1^2x_3^2,x_1x_2^2x_3,x_1x_2x_3^2\}$, 	
	 $G(B_2(x_2^2x_3^2))\\
	 =\{x_1^2x_2^2,x_1^2x_2x_3,x_1^2x_3^2,x_1x_2^2x_3,x_1x_2x_3^2,x_2^2x_3^2\}$. In these two cases, their sorted grphas are Figures $2$ and $3$, respectively.
	Obviously, they are chordal graphs, hence  $B_2(x_1x_2x_3^2)$ and $B_2(x_2^2x_3^2)$ are Freiman by  Lemma \ref{judge}.
	
	If $u=x_1x_2^2x_{i_4}$ with  $ i_4\geq 3$, then $B(u)$ is Freiman by \cite[Theorem 4 (c)]{ HZ2}. It follows that $B_2(u)$ is Freiman by Theorem \ref{$k$-Borel}.

If $u$ is a $2$-bounded monomial different from the above monomials, then it is one of the following seven  cases:
(1) $u=x_1x_{2}x_{3}x_{i_4}$ with  $i_4\geq 4$; (2) $u=x_1x_{2}x_{i_3}x_{i_4}$ with $4\leq i_3\leq i_4 $;
(3) $u=x_1x_{i_2}x_{i_3}x_{i_4}$ with $3\leq i_2\leq i_3\leq i_4 $;
(4)  $u=x_{2}^2 x_{3}x_{i_4}$ with  $i_4\geq 4$; (5) $u=x_{2}^2x_{i_3}x_{i_4}$ with $4\leq i_3\leq i_4$;
(6) $u=x_{2}x_{i_2}x_{i_3}x_{i_4}$ with $3\leq i_2\leq i_3\leq i_4$; (7) $u=x_{i_1}x_{i_2}x_{i_3}x_{i_4}$ with $3\leq i_1\leq i_2\leq i_3\leq i_4$.
No matter which case  $u$ is, we obtain that
$G_{B_2(u),s}$ contains an induced $4$-cycle with vertex set  $\{x_1^2x_2^2,x_1^2x_2x_4,x_1x_2x_3x_4,x_1x_2^2x_3\}$. Hence
$B_2(u)$ is not Freiman by  Lemma \ref{judge}.
\end{proof}

\medskip
A graph $G$ is called $3$-partite if its  vertex set $V$ can be partitioned into disjoint union $V=V_1\sqcup V_2\sqcup V_3$.

\begin{Lemma}\label{chordal}
	Let $p$ be a positive integer, $G$ be  a $3$-partite graph with vertex set $V=V_1\sqcup V_2\sqcup V_3$ and edge set $E(G)$ where $V_1=\{a_1,\ldots,a_p\}$,  $V_2=\{b_1,\ldots,b_p\}$ and  $V_3=\{c_1,\ldots,c_p\}$. If $G$
satisfies the following conditions:
\begin{itemize}
 	\item[(1)] The induced subgraph of $G$ on $V_i$ is a complete graph for any $1\leq i\leq 3$;
    \item[(2)] $\{a_i,b_j\}\in E(G)$ if and only if $i\leq j$;
    \item[(3)]$\{b_i,c_j\}\in E(G)$ if and only if $i\leq j$;
    \item[(4)] $\{a_i,c_j\}\notin E(G)$ for any $1\leq i,j\leq p$.
  \end{itemize}	
Then $G$ is chordal.  In particular, any induced subgraph of $G$ is also chordal.
\end{Lemma}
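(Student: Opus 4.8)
The plan is to prove that $G$ is chordal by showing that any cycle of length $\geq 4$ in $G$ has a chord. Since any induced subgraph of a chordal graph is chordal, the ``in particular'' clause follows immediately once the main claim is established. The key structural feature to exploit is that $G$ looks like a ``layered'' graph: three cliques $V_1, V_2, V_3$ stacked in a row, with $V_1$ and $V_3$ not connected at all, and the bipartite connections between consecutive layers governed by the threshold condition $i \leq j$. So any chordless cycle must alternate between $V_1$ and $V_2$, or between $V_2$ and $V_3$ (it cannot use all three layers at once in a chordless way, because to get from $V_1$ to $V_3$ one must pass through $V_2$, and the vertices of $V_2$ so used tend to be adjacent).

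First I would argue that a chordless cycle $C$ cannot meet all three of $V_1, V_2, V_3$. Suppose it does. Since there are no edges between $V_1$ and $V_3$, every maximal run of $C$ inside $V_1 \cup V_3$ must actually lie entirely in $V_1$ or entirely in $V_3$, and consecutive such runs are separated by vertices of $V_2$. Because $V_2$ induces a complete graph, if $C$ ever uses two distinct vertices of $V_2$ they are adjacent, giving a chord unless they are consecutive on $C$; pushing this, one shows $C$ can use at most two vertices of $V_2$, and a short case analysis (a cycle touching $V_1$, $V_2$, $V_3$ with at most two $V_2$-vertices, all three layers represented, length $\geq 4$) forces a chord using conditions (1)–(4) — typically the chord comes from the clique structure on $V_1$ or $V_3$, or from the monotonicity in (2) or (3). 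Hence $C$ lies within $V_1 \cup V_2$, or within $V_2 \cup V_3$; by symmetry of the hypotheses it suffices to treat $C \subseteq V_1 \cup V_2$.

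So reduce to the bipartite-plus-two-cliques situation: $C = v_1 v_2 \cdots v_t v_1$ with each $v_s \in V_1 \cup V_2$ and $t \geq 4$. If $C$ uses at least two vertices of $V_1$, they are adjacent (clique), so they must be consecutive on $C$, and similarly for $V_2$; thus $C$ has at most two vertices in $V_1$ and at most two in $V_2$, forcing $t = 4$ with $C = a_i\, b_j\, a_{i'}\, b_{j'}$ (indices in $V_1, V_2$ respectively, $i \neq i'$, $j \neq j'$). The edges of $C$ give $i \leq j$, $i' \leq j$, $i' \leq j'$, $i \leq j'$ by condition (2). But then, whichever of $j, j'$ is smaller — say $j \leq j'$ — satisfies $\max(i,i') \leq j$, so wait, I actually want the chord $\{a_i, a_{i'}\}$, which is always present since $a_i, a_{i'} \in V_1$ with $i \neq i'$; that is a chord of the $4$-cycle. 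Hence no chordless $4$-cycle exists inside $V_1 \cup V_2$, and the case $C$ uses at most one vertex of $V_1$ (hence lies in $V_2$ plus one pendant-ish vertex) cannot form a cycle of length $\geq 4$ at all, since $V_2$ is a clique and a single extra vertex adds at most two edges.

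**The main obstacle** I anticipate is the first step — ruling out chordless cycles that genuinely straddle all three layers. The argument that ``$C$ uses at most two vertices of $V_2$'' is the delicate point: one has to carefully track how runs of $C$ inside $V_1$ and inside $V_3$ are glued together through $V_2$, and then rule out the remaining small configurations using the threshold conditions (2) and (3) rather than the clique conditions. Once the cycle is confined to two consecutive layers, the clique structure on each layer does all the work and chords appear for free. I would organize the write-up as: (i) no edges between $V_1$ and $V_3$ $\Rightarrow$ structural normal form for cycles; (ii) at most two $V_2$-vertices on a chordless cycle; (iii) the three-layer case is impossible; (iv) the two-layer case reduces to a $4$-cycle which has a chord from the clique on the repeated layer; (v) conclude chordality, and note the induced-subgraph statement is automatic.
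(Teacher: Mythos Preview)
Your overall strategy matches the paper's: use the clique condition~(1) to bound $|V(C)\cap V_i|\le 2$, hence $t\le 6$, and then finish by case analysis. But your two-layer $4$-cycle step contains a self-contradiction that leaves the crucial case untreated. You correctly argue that in a chordless cycle the two vertices from $V_1$ must be \emph{consecutive} on $C$ (otherwise the clique edge between them is a chord), and likewise for $V_2$. Yet you then write the surviving cycle as $a_i\,b_j\,a_{i'}\,b_{j'}$, the \emph{alternating} form in which $a_i$ and $a_{i'}$ are not consecutive, and invoke the clique chord $\{a_i,a_{i'}\}$. That configuration is exactly the one your own argument has already excluded. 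The case that actually remains is the consecutive form $a_i\,a_{i'}\,b_{j}\,b_{j'}$, where the only candidate chords are $\{a_i,b_j\}$ and $\{a_{i'},b_{j'}\}$. The clique condition says nothing about these; you must use the threshold condition~(2). From the cycle edges one gets $i'\le j$ and $i\le j'$, while chordlessness would require $i>j$ and $i'>j'$, yielding $i'\le j<i\le j'<i'$, a contradiction. This is precisely how the paper closes the argument. So condition~(2) is indispensable already in the two-layer case, not just as a tool for the three-layer ``main obstacle'' you flag.
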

\begin{proof} It is trival for the case $p=1,2$. Now, we assume that $p\geq 3$. If $G$ contains  an induced $t$-cycle $C$ of length $t\geq 4$, then we label its vertex as $v_1,v_2,\ldots,v_t$ in clockwise order.
By the hypothesis (1), one has $|\{v_1,\ldots,v_t\}\cap V_i|\leq 2$ for any $i$, which forces that $t\leq 6$. If $t=5$ or $6$, then there are at least a vertex, say $v_1$, belongs to $V_1$. It follows that $v_2\in V_1$
or $v_2\in V_2$ by the supposition (4). If $v_2\in V_1$, then, by the supposition (1), one has  $v_3,v_t\in V_2$ and $\{v_3,v_t\}\in E(G)$. This contradicts the assumption that $C$ is a circle  of length $t$.
If $v_2\in V_2$, then  we can also arrive at contradictions by similar arguments. Next, let  $t=4$, we consider the following two cases:

(1) If $\{v_1,\ldots,v_4\}\cap V_i\neq \emptyset$ for any $i$. Let $v_1\in V_1$, then $v_3\in V_3$ by the supposition (4). It follows that $v_2,v_3\in V_2$ since $\{v_2,v_3\},\{v_3,v_4\}\in E(G)$. This implies that
$\{v_2,v_4\}\in E(G)$, a contradiction.

(2) If there exists some $i$ such that $\{v_1,\ldots,v_4\}\cap V_i=\emptyset$, then $i\neq 2$, and $|\{v_1,\ldots,v_4\}\cap V_1|=|\{v_1,\ldots,v_4\}\cap V_2|=2$ or $|\{v_1,\ldots,v_4\}\cap V_3|=|\{v_1,\ldots,v_4\}\cap V_2|=2$. We may assume that the former case holds and $v_1,v_2\in V_1$, then $v_3,v_4\in V_2$. Thus $v_1=a_i,v_2=a_j$, $v_3=b_\ell,v_4=b_m$ with $i\leq m$ and $j\leq \ell$. Since $\{v_1,v_3\}=\{a_i,b_\ell\}\in E(G)$, we have $\ell<i$. Hence $j<m$ and $\{v_2,v_4\}\in E(G)$, a contradiction.
\end{proof}

Now, we stipulate that $\prod\limits_{j=l}^{m}x_{j}=1$ if $l>m$.

 \begin{Theorem}\label{general}
 Let $d\geq 5$ be an integer and  $u$  one of the following $2$-bounded monomials of degree $d$:
 \begin{itemize}
 	\item[(1)] $u=\left\{\begin{array}{ll}
 		x_1(\prod_{i=2}^{m}x_i^2)x_{m+1}\ &\text{if}\ d=2m\\
 		x_1(\prod_{i=2}^{m+1}x_i^2)\ & \text{if}\ d=2m+1
 	\end{array}\right.$;
 		\item[(2)]
 $u=\left\{\begin{array}{ll}
 		x_1(\prod_{i=2}^{m}x_i^2)x_{m+2}\ &\text{if}\ d=2m\\
 		x_1(\prod_{i=2}^{m}x_i^2)x_{m+1}x_{m+2}\ & \text{if}\ d=2m+1
 	\end{array}\right.$;
 		\item[(3)]$u=\left\{\begin{array}{ll}
 		x_1(\prod_{i=2}^{m-1}x_i^2)x_{m}x_{m+1}^2 &\text{if}\ d=2m\\
 		x_1(\prod_{i=2}^{m}x_i^2)x_{m+2}^2 & \text{if}\ d=2m+1
 	\end{array}\right.$.
  \end{itemize}	
 	Then $B_2(u)$ is Freiman.
\end{Theorem}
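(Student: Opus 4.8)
The plan is to reduce the Freiman property to a chordality statement and then analyse the resulting graph. By Theorem~\ref{sort} the ideal $B_2(u)$ is sortable, so by Lemma~\ref{judge} it suffices to prove that the sorted graph $G_{B_2(u),s}$ is chordal, in each of the three families and for each parity of $d=2m$ or $d=2m+1$ (six cases in all). The first task is to write down $G(B_2(u))$ explicitly. Using $B_2(u)=B(u)^{\le 2}$ (as in the proof of Theorem~\ref{sort}), a monomial $v$ belongs to $G(B_2(u))$ exactly when $v$ is $2$-bounded of degree $d$ and $\sum_{i\le k}\nu_i(v)\ge\sum_{i\le k}\nu_i(u)$ for all $k$; equivalently, the weakly increasing exponent-index sequence of $v$ is entrywise dominated by that of $u$. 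Feeding the very rigid shapes of $u$ in (1)--(3) into this inequality shows that in every case $G(B_2(u))$ consists of about $2m$ generators of a few structured types: one or two ``clique families'' $\{v^{(1)}_j\}$, $\{v^{(2)}_j\}$, each indexed by $j$ in an interval of length roughly $m$ and obtained from a fixed ``full'' monomial by lowering a single exponent, together with a bounded number of distinguished generators such as $x_1^2\cdots x_m^2$ and small perturbations of it.

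Next I would compute the edges of $G_{B_2(u),s}$ using the interleaving criterion for a sorted pair: $(p,q)$ is sorted if and only if the two weakly increasing exponent-index sequences of $p$ and $q$ interleave. For a product of two of the structured monomials above one reads off the sorting $(p',q')$ by a block-parity bookkeeping: group the sorted sequence of $pq$ into its maximal runs of equal indices and track which positions of each run land in the odd, respectively the even, slots. Carrying this out yields: (i) each clique family is indeed a clique of $G_{B_2(u),s}$; (ii) in family (1) the product of any two generators sorts back to a pair of generators, so $G_{B_2(u),s}$ is the complete graph; (iii) in families (2) and (3) the bipartite edges between the two clique families obey a monotone threshold ($v^{(1)}_j\sim v^{(2)}_{j'}$ iff $j\le j'$ after suitable relabelling), while each distinguished generator is adjacent either to everything (a universal vertex, e.g.\ $x_1^2\cdots x_{m-1}^2x_mx_{m+1}$ in family (3) with $d=2m$, or $x_1^2\cdots x_m^2$ in family (2) with $d=2m$) or precisely to one clique family (a simplicial vertex, whose neighbourhood is a clique).

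With $G_{B_2(u),s}$ described, chordality follows structurally. Complete graphs are chordal, which settles family (1). In families (2) and (3) I would use the two elementary facts that $G$ is chordal whenever it has a universal vertex $w$ with $G-w$ chordal (a cycle of length $\ge 4$ through $w$ has the chord from $w$ to any of its cycle-vertices non-consecutive with $w$ on the cycle), and whenever it has a simplicial vertex $v$ with $G-v$ chordal (the two cycle-neighbours of $v$ are adjacent). Removing the distinguished universal and simplicial generators one after another, what remains is exactly the union of two cliques $\{v^{(1)}_j\}$, $\{v^{(2)}_j\}$ with $v^{(1)}_j\sim v^{(2)}_{j'}$ iff $j\le j'$, which is an induced subgraph of the graph of Lemma~\ref{chordal} (take its $V_1\cup V_2$, discarding $V_3$), hence chordal. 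Lemma~\ref{judge} then gives that $B_2(u)$ is Freiman. In the even cases of families (1) and (2) one can alternatively short-circuit this: there $G(B_2(u))$ equals a fixed monomial times the set of basis monomials of a matroid, so $B_2(u)$ is Freiman because matroid ideals are Freiman and the Freiman property is unchanged by multiplying all generators by a common monomial.

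The hard part is the bookkeeping, not the ideas. One must (a) enumerate $G(B_2(u))$ correctly, which genuinely splits into the six cases with their own generator types, and (b) evaluate $\sort$ on all pairs of generators, where the parity analysis of the block positions is exactly where off-by-one errors creep in. A further nuisance is that the six cases do not reduce to one another, so the threshold patterns and the identification of the universal/simplicial distinguished generators have to be verified case by case before Lemma~\ref{chordal} can be invoked; once that is in place, the chordality conclusion itself is short.
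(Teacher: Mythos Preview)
Your strategy coincides with the paper's: reduce to chordality via Lemma~\ref{judge}, enumerate $G(B_2(u))$, determine the sorted edges, and invoke Lemma~\ref{chordal}; case~(1) is indeed a complete graph in both parities, and cases~(2) and~(3) with $d=2m$ fit the two-clique threshold picture you describe.

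The one place your sketch goes astray is case~(3) with $d=2m+1$. There the generators of $B_2(u)$ do \emph{not} form two clique families plus boundedly many extras: they split into \emph{three} families of size roughly $m$ each---those whose tail is $x_{m+2}^2$, those with tail $x_{m+1}x_{m+2}$, and those with tail $x_{m+1}^2$, together with the two singletons $(\prod_{i=1}^{m} x_i^2)x_{m+1}$ and $(\prod_{i=1}^{m} x_i^2)x_{m+2}$---and there are no sorted edges between the first and third families. The paper handles this by applying Lemma~\ref{chordal} in its full $3$-partite form. Your peeling reduction can be salvaged (the $x_{m+2}^2$-family can be stripped one simplicial vertex at a time, starting from the largest index $p$, since at each stage that vertex's neighbourhood lies in a single remaining clique union a tail of the middle family), but this removes $\sim m$ vertices, not a bounded number; so the assertion ``what remains is exactly the union of two cliques after removing the distinguished generators'' is false as stated. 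Once you either allow three cliques or an unbounded simplicial peel, the argument closes exactly as in the paper. The matroid shortcut you mention for the even subcases is not used in the paper and would need its own verification.
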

\begin{proof} (1) Let $v_1,v_2\in G(B_2(u))$. If $d=2m+1$, then $v_1=(\prod_{i=1}^{p-1}x_i^2)x_{p}(\prod_{i=p+1}^{m+1}x_i^2)$,  $v_2=(\prod_{i=1}^{q-1}x_i^2)x_{q}(\prod_{i=q+1}^{m+1}x_i^2)$ for some $1\leq p,q\leq m+1$, or vice versa. If $d=2m$, then $v_1=\prod_{i=1}^{m}x_i^2$ or $(\prod_{i=1}^{s-1}x_i^2)x_{s}(\prod_{i=s+1}^{m}x_i^2)x_{m+1}$,  $v_2=(\prod_{i=1}^{t-1}x_i^2)x_{t}(\prod_{i=t+1}^{m}x_i^2)x_{m+1}$ for some $1\leq s,t\leq m$, or vice versa.
 No matter which case, one has  $\text{sort}(v_1,v_2)=(v_1,v_2)$ or $\text{sort}(v_1,v_2)=(v_2,v_1)$. This forces that
 the sorted graph $G_{B_2(u),s}$  is a complete graph,  hence $B_2(u)$ is Freiman by  Lemma \ref{judge}.

(2) If $d=2m$, then for any $v\in G(B_2(u))$, one has $v=(\prod_{i=1}^{s-1}x_i^2)x_s(\prod_{i=s+1}^{m}x_i^2)x_{m+1}$, or  $\prod_{i=1}^{m}x_i^2$, or
$(\prod_{i=1}^{t-1}x_i^2)x_t(\prod_{i=t+1}^{m}x_i^2)x_{m+2}$ with $1\leq s,t\leq m$.
Put $V_1=\{a_1,\ldots,a_{m+1}\}$,  $V_2=\{b_1,\ldots,b_{m}\}$, where $a_s=(\prod_{i=1}^{s-1}x_i^2)x_s(\prod_{i=s+1}^{m}x_i^2)x_{m+1}$,  $a_{m+1}=\prod_{i=1}^{m}x_i^2$ and
$b_t=(\prod_{i=1}^{t-1}x_i^2)x_t(\prod_{i=q+1}^{m}x_i^2)x_{m+2}$ for $1\leq s,t\leq m$. Then $V_1\sqcup V_2$ is the vertex set of the sorted graph $G_{B_2(u),s}$
 and it is easy to verify that the induced subgraph of $G_{B_2(u),s}$ on $V_i$ is a complete graph for  $i=1,2$.
For $1\leq s,t\leq m$, we obtain that  $\{a_s,b_{t}\}\in E(G_{B_2(u),s}))$  if and only if $s\geq
t$ and $\{a_{m+1},b_{t}\}\in E(G_{B_2(u),s}))$   by the definition of sorted. 
Thus $G_{B_2(u),s}$ is  isomorphic to an induced subgraph of  $G$ in Lemma \ref{chordal}  with $p=m+1$,
hence $B_2(u)$ is Freiman by Lemma \ref{judge}.

If $d=2m+1$, then for any $v\in G(B_2(u))$, one has $v=(\prod_{i=1}^{m}x_i^2)x_{m+2}$, or $(\prod_{i=1}^{p-1}x_i^2)x_p(\prod_{i=p+1}^{m}x_i^2)x_{m+1}x_{m+2}$, or
$(\prod_{i=1}^{q-1}x_i^2)x_q(\prod_{i=q+1}^{m+1}x_i^2)$ with $1\leq p\leq m$,
$1\leq q\leq m+1$. Let  $V_1=\{a_1,\ldots,a_{m+1}\}$, $V_2=\{b_1,\ldots,b_{m+1}\}$, where 
$a_p=(\prod_{i=1}^{p-1}x_i^2)x_p(\prod_{i=p+1}^{m}x_i^2)x_{m+1}x_{m+2}$, $a_{m+1}=(\prod_{i=1}^{m}x_i^2)x_{m+2}$ and $b_q=(\prod_{i=1}^{q-1}x_i^2)x_q\cdot (\prod_{i=q+1}^{m+1}x_i^2)$
 for $1\leq p\leq m$, $1\leq q\leq m+1$. Then $V_1\sqcup V_2$ is the vertex set of the sorted graph $G_{B_2(u),s}$.
Similar to arguments as the case $d=2m$, we get that the induced subgraph of $G_{B_2(u),s}$ on $V_i$ is a complete graph for  $i=1,2$ and $\{a_p,b_{q}\}\in E(G_{B_2(u),s}))$  if and only if $p\leq q$.
Therefore, $B_2(u)$ is Freiman by Lemmas \ref{chordal} and \ref{judge}.

(3) If $d=2m$, then for any $v\in G(B_2(u))$, we have $v=\prod_{i=1}^{m}x_i^2$, or
 $(\prod_{i=1}^{m-1}x_i^2)x_{m+1}^2$ or
$(\prod_{i=1}^{p-1}x_i^2)x_p(\prod_{i=p+1}^{m-1}x_i^2)
x_{m}x_{m+1}^2$, or $(\prod_{i=1}^{q-1}x_i^2)x_q(\prod_{i=q+1}^{m}x_i^2)x_{m+1}$, where
$1\leq p\leq m-1$, and $1\leq q\leq m$.
Put $V_1=\{a_1,\ldots,a_{m}\}$, $V_2=\{b_0,b_1,\ldots,b_{m}\}$, where $a_p=(\prod_{i=1}^{p-1}x_i^2)x_p(\prod_{i=p+1}^{m-1}x_i^2)x_{m}x_{m+1}^2$,
$a_m=(\prod_{i=1}^{m-1}x_i^2)x_{m+1}^2$, $b_0=\prod_{i=1}^{m}x_i^2$ and
$b_q=(\prod_{i=1}^{q-1}x_i^2)x_q\cdot (\prod_{i=q+1}^{m}x_i^2)x_{m+1}$ for $1\leq p\leq m-1$
and $1\leq q\leq m$. Then $V_1\sqcup V_2$ is the vertex set of the sorted graph
 $G_{B_2(u),s}$ and the induced subgraph of
 $G_{B_2(u),s}$  on $V_i$ is a complete graph for $i=1,2$ and $\{a_p,b_q\}\in E(G_{B_2(u),s})$
 if and only if $p\leq q$. Therefore, $B_2(u)$ is Freiman by Lemmas \ref{chordal} and \ref{judge}

If $d=2m+1$, then for any $v\in G(B_2(u))$, we have $v=(\prod_{i=1}^{m}x_i^2)x_{m+1}$, or
$(\prod_{i=1}^{m}x_i^2)x_{m+2}$, or
$(\prod_{i=1}^{p-1}x_i^2)x_p(\prod_{i=p+1}^{m}x_i^2)x_{m+2}^2$, or
$(\prod_{i=1}^{q-1}x_i^2)x_q(\prod_{i=q+1}^{m}x_i^2)x_{m+1}x_{m+2}$, or
$(\prod_{i=1}^{r-1}x_i^2)\cdot x_r(\prod_{i=r+1}^{m}x_i^2)x_{m+1}^2$,
where $1\leq p,q,r\leq m$.
Set $V_1=\{a_1,\ldots,a_{m-1}\}$, $V_2=\{b_1,\ldots,b_{m+1}\}$ and $V_3=\{c_1,\ldots,
c_{m+1}\}$, where $a_p=(\prod_{i=1}^{p-1}x_i^2)x_p(\prod_{i=p+1}^{m}x_i^2)x_{m+2}^2$,
$b_q=(\prod_{i=1}^{q-1}x_i^2)x_q(\prod_{i=q+1}^{m}x_i^2)x_{m+1}x_{m+2}$, $b_{m+1}=(\prod_{i=1}^{m}x_i^2)x_{m+2}$,
$c_r=(\prod_{i=1}^{r-1}x_i^2)x_r\cdot (\prod_{i=r+1}^{m}x_i^2)x_{m+1}^2$, and
$c_{m+1}=(\prod_{i=1}^{m}x_i^2)x_{m+1}$ for $1\leq p,q,r\leq m$.
Then $V_1\sqcup V_2\sqcup V_3$ is the vertex set of the sorted graph $G_{B_2(u),s}$ and the induced subgraph of $G_{B_2(u),s}$ on $V_i$ is a
complete graph for $i=1,2,3$ and  $\{a_p,b_q\}\in E(G_{B_2(u),s})$ if and only if $p\leq q$,
$\{b_q,c_t\}\in E(G_{B_2(u),s})$ if and only if $q\leq t$.
Hence  $B_2(u)$ is Freiman by Lemmas \ref{chordal} and \ref{judge}.
This proof is completed.
\end{proof}

 \begin{Theorem}\label{main3}
 Let $d\geq 5$ be an integer, and  $u$ be a $2$-bounded monomial of degree $d$ with  $x_1^2\nmid u$. Then $B_2(u)$ is Freiman if and only if $u$ is a $2$-bounded monomial as given
in  Theorem \ref{general}.
\end{Theorem}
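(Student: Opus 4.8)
The result is the conjunction of Theorem~\ref{general} (the ``if'' direction) with a new ``only if'' assertion, so the plan is to prove the converse: if $u$ is a $2$-bounded monomial of degree $d\ge 5$ with $x_1^2\nmid u$ that is \emph{not} one of the monomials listed in Theorem~\ref{general}, then $B_2(u)$ is not Freiman. By Theorem~\ref{sort}, $B_2(u)$ is sortable, so by Lemma~\ref{judge} it is enough to produce an induced cycle of length $\ge 4$ in the sorted graph $G_{B_2(u),s}$. Throughout I would work with the combinatorial model of $G(B_2(u))$: its vertices are exactly the $2$-bounded monomials $v$ of degree $d$ whose weakly increasing exponent-index sequence is dominated componentwise by that of $u$, and whether two such monomials are adjacent is decided by the merge-and-de-interleave recipe defining $\sort$, exactly as in the verification of the induced $4$-cycle in Theorem~\ref{d=4}.

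The organising observation is that the three families of Theorem~\ref{general} are precisely the $2$-bounded monomials whose index sequence is the lowest possible ``staircase'' $1,2,2,3,3,\dots$ carrying one of a few admissible short tails (an extra $x_{m+1}$, an extra $x_{m+2}$, an extra $x_{m+1}x_{m+2}$, an extra $x_{m+2}^2$, or a terminal $x_mx_{m+1}^2$, according to the parity of $d$). So $u$ fails to be on the list exactly when this staircase ``breaks too early'' or ends in an inadmissible way, and I would split the proof into: (i) $x_1\nmid u$; (ii) $x_1\mid u$ but $x_k^2\nmid u$ for some $k$ at which the remaining degree still forces a square; and (iii) the staircase is as long as the degree allows but the top of $u$ is not an admissible tail (this case absorbs the parity split $d=2m$ versus $d=2m+1$ and the small degrees $d=5,6$). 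Since $B_2(u')\subseteq B_2(u)$ whenever $u'$ is dominated by $u$, and the corresponding sorted graph is then an induced subgraph, in each case it suffices to exhibit the required cycle inside $B_2$ of a single, conveniently chosen $u'$ still belonging to that case; Lemma~\ref{isomorphic} further lets me normalise the bottom variables when helpful.

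For the cycles I would use a small repertoire of explicit $4$-cycle templates: the one from Theorem~\ref{d=4}, together with shifted and ``padded'' variants — e.g.\ a quadruple of the form $\{w\,x_a^2x_b^2,\ w\,x_a^2x_bx_e,\ w\,x_ax_bx_cx_e,\ w\,x_ax_b^2x_c\}$ with $a<b<c<e$ and $w$ a product of $x_1$ and of squares of variables $\le a$ (used when $u$ carries too few squares), and a ``jump'' variant modelled on the $d=6$ instance $u=x_1x_2^2x_3^2x_6$, for which $\{x_1^2x_2^2x_3x_5,\ x_1^2x_2x_3^2x_6,\ x_1^2x_2x_3^2x_4,\ x_1x_2^2x_3^2x_5\}$ is an induced $4$-cycle (used when the defect of $u$ is a gap in the support or a single where a square is prescribed). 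In each instance one chooses $w$ and the four ``active'' variables so that all four monomials are $2$-bounded and dominated by $u$, and then verifies by a short direct computation (the four edge sortings and the two diagonal sortings, as in Theorem~\ref{d=4}) that the quadruple is an induced $4$-cycle.

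The main obstacle is the bookkeeping of this case analysis rather than any single hard step. In contrast to the positive direction, where Lemma~\ref{chordal} does all the work at once, here each of the several ``failure modes'' of $u$ needs its own explicit quadruple, and for each one must check simultaneously that every chosen monomial is $2$-bounded, that it is dominated by $u$, and that the prescribed non-edges genuinely fail to sort so that the quadruple is induced. The delicate points are the parity split, the small-degree boundary cases where the staircase is too short for the generic argument, and the requirement that the resulting exceptional set be \emph{exactly} the three families of Theorem~\ref{general}: no smaller (so every off-list $u$ must be supplied with a cycle) and no larger (which is what Theorem~\ref{general} guarantees).
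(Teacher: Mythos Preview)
Your plan is sound and rests on the same machinery as the paper: sortability (Theorem~\ref{sort}), the chordal criterion (Lemma~\ref{judge}), the induced-subgraph reduction $B_2(u')\subseteq B_2(u)$, and the normalisation of Lemma~\ref{isomorphic}. Where you diverge is in the organisation of the case analysis and, more substantially, in how the individual cases are dispatched. You partition the off-list $u$'s according to where the ``staircase'' $1,2,2,3,3,\dots$ first breaks and then supply a bespoke $4$-cycle template for each failure mode. The paper instead observes that the three families of Theorem~\ref{general} are precisely the three lex-smallest $2$-bounded monomials of degree $d$ with $x_1^2\nmid u$, and then splits the remaining $u$'s only by the size of the top indices $i_d,i_{d-1},i_{d-2}$ relative to $m+2$. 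For most of these subcases the paper does \emph{not} build a $4$-cycle directly: it exhibits an inclusion of the form $B_2(u)\supseteq (\prod_i x_i)\,B_1(w)$ for a suitable squarefree monomial $w$, notes that the sorted graph of $B_1(w)$ embeds as an induced subgraph of $G_{B_2(u),s}$, and then invokes the already established classification of Freiman $1$-spread principal Borel ideals \cite[Theorem~2.5(1)]{ZZC2}. This single reduction replaces several of your hand-built templates, considerably shortening the argument. Your approach is more self-contained (no appeal to the $t$-spread paper) but pays for that with the ``bookkeeping obstacle'' you yourself flag; the paper's route trades self-containment for brevity by leaning on~\cite{ZZC2}.
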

\begin{proof}$(\Leftarrow)$  It follows from Theorem \ref{general}.

$(\Rightarrow)$  Let $u_1=x_1(\prod_{i=2}^{m+1}x_i^2)$, $u_2=x_1(\prod_{i=2}^{m}x_i^2)x_{m+1}x_{m+2}$, $u_3=x_1(\prod_{i=2}^{m}x_i^2)x_{m+2}^2$,
$u_4=x_1(\prod_{i=2}^{m}x_i^2)x_{m+1}$, $u_5=x_1(\prod_{i=2}^{m}x_i^2)x_{m+2}$, $u_6=x_1(\prod_{i=2}^{m-1}x_i^2)x_{m}x_{m+1}^2$. Then
$u_1,u_2,u_3$ and $u_4,u_5,u_6$ are the first three $2$-bounded monomials of degree $d$ with $x_1^2\nmid u_i$ in lexicographically order when $d=2m+1$  and $d=2m$, respectively.

It is enough to show the following two  statements:
\begin{itemize}
 	\item[(1)]  If  $d=2m+1$ and  $u$ is any $2$-bounded monomial of degree $d$ other than $u_1,u_2$ and $u_3$, then $B_2(u)$ is not  Freiman;
\item[(2)] If  $d=2m$ and  $u$ is any $2$-bounded monomial of degree $d$ other than $u_4,u_5$ and $u_6$, then $B_2(u)$ is also not  Freiman.
\end{itemize}
 Let $u=x_{i_1}x_{i_2}\cdots x_{i_d}$ with $i_1\leq i_2\leq \cdots\leq i_d$ and  $x_1^2\nmid u$.
  If case (1) happens, then there exists some $i\geq m+3$ such that $x_{i}|u$ or
  $m+1\leq i_{d-2}\leq i_{d-1}\leq i_{d}\leq m+2$.
If there exists some $i\geq m+3$ such that $x_{i}|u$, then    $\{v_1,v_2,v_3,v_4\}\subseteq G(B_2(u))$, where
$v_1=(\prod\limits_{i=1}^{m-1}x_i^2)x_{m}x_{m+1}^2$, $v_2=x_1(\prod\limits_{i=2}^{m}x_i^2)x_{m+1}x_{m+2}$, $v_3=(\prod\limits_{i=1}^{m}x_i^2)x_{m+2}$ and $v_4=(\prod\limits_{i=1}^{m-1}x_i^2)x_{m}x_{m+1}x_{m+3}$.
It is easy to check that the sorted graph $G_{B_2(u),s}$ contains an induced 4-cycle with vertices $v_1,v_2,v_3$ and $v_4$. It follows that $B_2(u)$ is not Freiman by  Lemma \ref{judge}.

If $m+1\leq i_{d-2}\leq i_{d-1}\leq i_{d}\leq m+2$, then $G(B_2(v))\subseteq G(B_2(u))$, where $v=(\prod_{i=1}^{m-2}x_i^2)x_{m-1}x_mx_{m+1}^2x_{m+2}$ and the sorted graph $G_{B_2(v),s}$ is  an
induced subgraph of the sorted graph $G_{B_2(u),s}$. By repeatedly applying  Lemma \ref{isomorphic}, we obtain that  $B_2(v)$ is Freiman if and only if $B_2(w)$ is Freiman, where $w=x_1x_2x_3^2x_4$.
However,  the sorted graph $G_{B_2(w),s}$ contains an induced $4$-cycle with the vertex set
$\{x_1^2x_2^2x_3,x_1x_2^2x_3^2,x_1x_2x_3^2x_4,x_1^2x_2x_3x_4\}$. Hence $B_2(u)$ is not Freiman by  Lemma \ref{judge}.

 If case (2) happens, then there exists some $i\geq m+3$ such that $x_{i}|u$, or
 $i_{d}= m+2$ and  $i_{d-1}\geq m+1$, or $i_{d}=i_{d-1}= m+1$ and
  $i_{d-2}=i_{d-3}=m$.

If there exists some $i\geq m+3$ such that $x_{i}|u$, or  $i_{d}= m+2$ and  $i_{d-1}\geq m+1$, then $B_2(u)$ satisfies $B_2(u)\supseteq B_2(x_1(\prod\limits_{i=2}^{m}x_i^2)x_{m+3})\supseteq (\prod_{i=1}^{m}x_i)B_{1}((\prod\limits_{i=2}^{m}x_i)x_{m+3})$ or $B_2(u)\supseteq B_2(x_1(\prod\limits_{i=2}^{m-1}x_i^2)x_mx_{m+1}x_{m+2})\supseteq (\prod_{i=1}^mx_i)B_{1}((\prod\limits_{i=2}^{m-1}x_i)x_{m+1}x_{m+2})$ respectively. Moreover, the sorted graphs of $B_{1}((\prod\limits_{i=2}^{m}x_i)x_{m+3})$  and $B_{1}((\prod\limits_{i=2}^{m-1}x_i)x_{m+1}x_{m+2})$ are isomorphic to the induced subgraphs of  $G_{B_2(u),s}$, respectively.
In both cases, we obtain that  $B_2(u)$ is not Freiman by \cite[Theorem 2.5 (1)]{ZZC2}  and Lemma \ref{judge}.

Under the condition that $i_{d}=i_{d-1}= m+1$ and $i_{d-2}=i_{d-3}=m$. If $m=3$, then $B_2(u)\supseteq B_2(x_1x_2x_3^2x_4^2)\supseteq x_1x_2x_3x_4B_{1}(x_3x_4)$ and  the sorted graph
$G_{B_1(x_3x_4),s}$ is isomorphic to an induced subgraph of the  sorted graph  $G_{B_2(u),s}$. If $m\geq 4$, then
$B_2(u)\supseteq B_2(x_1(\prod\limits_{i=2}^{m-2}x_i^2)x_{m-1}x_{m}^2x_{m+1}^2)\supseteq (\prod_{i=1}^{m+1}x_i)B_{1}((\prod\limits_{i=2}^{m-2}x_i)x_mrx_{m+1})$.
Similarly, the sorted graphs of $B_{1}((\prod\limits_{i=2}^{m-2}x_i)x_mx_{m+1})$ is also isomorphic to an induced subgraph of the  sorted graph  $G_{B_2(u),s}$. Hence we obtain that
$B_2(u)$ is not Freiman by \cite[Theorem 2.5 (1)]{ZZC2}  and Lemma \ref{judge}.
\end{proof}

\medskip
\hspace{-6mm} {\bf Acknowledgments}

 \vspace{3mm}
\hspace{-6mm} The authors gratefully acknowledge the use of the computer
algebra system CoCoA \cite{Co} for our experiments.  This research is supported by the National Natural Science Foundation of China (No.11271275) and  by foundation of the Priority Academic Program Development of Jiangsu Higher Education Institutions.

\medskip

\end{document}